\setlist[enumerate]{
	label=\textnormal{({\roman*})},
	ref={\roman*}}
\def\.{\hskip.06cm}
\newtheorem*{rep@theorem}{\rep@title}
\newcommand{\newreptheorem}[2]{%
	\newenvironment{rep#1}[1]{%
		\def\rep@title{#2 \ref{##1}}%
		\begin{rep@theorem}}%
		{\end{rep@theorem}}}
\newtheorem{theorem}{Theorem}[section]
\newtheorem{lemma}[theorem]{Lemma}
\theoremstyle{definition}
\newtheorem{definition}[theorem]{Definition}
\newtheorem{problem}[theorem]{Problem}
\theoremstyle{remark}
\numberwithin{equation}{section}
\newcommand*{\da@rightarrow}{\mathchar"0\hexnumber@\symAMSa 4B }
\newcommand*{\da@leftarrow}{\mathchar"0\hexnumber@\symAMSa 4C }
\newcommand*{\xdashrightarrow}[2][]{%
	\mathrel{%
		\mathpalette{\da@xarrow{#1}{#2}{}\da@rightarrow{\,}{}}{}%
	}%
}
\newcommand{\xdashleftarrow}[2][]{%
	\mathrel{%
		\mathpalette{\da@xarrow{#1}{#2}\da@leftarrow{}{}{\,}}{}%
	}%
}
\newcommand*{\da@xarrow}[7]{%
	\sbox0{$\ifx#7\scriptstyle\scriptscriptstyle\else\scriptstyle\fi#5#1#6\m@th$}%
	\sbox2{$\ifx#7\scriptstyle\scriptscriptstyle\else\scriptstyle\fi#5#2#6\m@th$}%
	\sbox4{$#7\dabar@\m@th$}%
	\dimen@=\wd0 %
	\ifdim\wd2 >\dimen@
	\dimen@=\wd2 %
	\fi
	\count@=2 %
	\def\da@bars{\dabar@\dabar@}%
	\@whiledim\count@\wd4<\dimen@\do{%
		\advance\count@\@ne
		\expandafter\def\expandafter\da@bars\expandafter{%
			\da@bars
			\dabar@ 
		}%
	}%
	\mathrel{#3}%
	\mathrel{%
		\mathop{\da@bars}\limits
		\ifx\\#1\\%
		\else
		_{\copy0}%
		\fi
		\ifx\\#2\\%
		\else
		^{\copy2}%
		\fi
	}%
	\mathrel{#4}%
}
\newcommand{\overrightharpoon}{%
	\mathpalette{\overarrow@\rightharpoonfill@}}
\def\rightharpoonfill@{\arrowfill@\relbar\relbar\rightharpoonup}
\newcommand{\osh}{\mathpalette{\overarrowsmall@\rightharpoonfill@}}
\def\rightharpoonfill@{\arrowfill@\relbar\relbar\rightharpoonup}
\newcommand{\overarrowsmall@}[3]{%
	\vbox{%
		\ialign{%
			##\crcr
			#1{\smaller@style{#2}}\crcr
			\noalign{\nointerlineskip}%
			$\m@th\hfil#2#3\hfil$\crcr
		}%
	}%
}
\def\smaller@style#1{%
	\ifx#1\displaystyle\scriptstyle\else
	\ifx#1\textstyle\scriptstyle\else
	\scriptscriptstyle
	\fi
	\fi
}
\DeclareMathOperator{\Ac}{\mathcal{A}} 
\DeclareMathOperator{\Bc}{\mathcal{B}} 
\DeclareMathOperator{\Cc}{\mathcal{C}} 
\DeclareMathOperator{\Dc}{\mathcal{D}} 
\DeclareMathOperator{\Eb}{\mathbb{E}} 
\DeclareMathOperator{\Ec}{\mathcal{E}} 
\DeclareMathOperator{\Gc}{\mathcal{G}} 
\DeclareMathOperator{\ousf}{\osh{\mathsf{USF}}} 
\DeclareMathOperator{\owusf}{\osh{\mathsf{WSF}}} 
\DeclareMathOperator{\Pb}{\mathbb{P}} 
\DeclareMathOperator{\SF}{\osh{\textnormal{SF}}} 
\DeclareMathOperator{\Zb}{\mathbb{Z}} 
\begin{document}

\title[Rotor walk stationarity]{Infinite-step stationarity of rotor walk and the wired spanning forest}

\author{Swee Hong Chan}
\address{Swee Hong Chan, Department of Mathematics, UCLA,  Los Angeles, California 90095}
\email{sweehong@math.ucla.edu}
\thanks{The author was supported in part by NSF grant DMS-1455272.}

\subjclass[2020]{Primary 05C81, 82C20; Secondary 05C05}

\date{January 6, 2020 and, in revised form, \today.}


\keywords{rotor walk, rotor-router, uniform spanning forest, wired spanning forest, stationary distribution, escape rate}

\begin{abstract}
We study  rotor walk, a  deterministic counterpart of the simple random walk, on infinite transient graphs.
We show  that
the final rotor configuration of the rotor walk (after the walker escapes to infinity) 
follows the law of 
the  wired uniform spanning forest oriented toward infinity (OWUSF) measure
when the initial rotor configuration is sampled from OWUSF.
This result holds for all graphs for which each tree in the wired spanning forest has one single end almost surely.
This answers a question posed in a previous work of the author (Chan 2018). 
\end{abstract}

\maketitle

\section{Introduction}\label{section: main result}

In a \emph{rotor walk} on a graph $G:=(V,E)$, each vertex of $G$ has   an arrow called a \emph{rotor} that points  toward a neighbor of the vertex;
all of the rotors together   constitute the \emph{rotor configuration} of the walk.
At each discrete time step,
the walker changes the rotor of its current location
by following a prescribed periodic sequence.
After that, the walker moves to the location to which the new rotor is pointing.

Continuing on the previous work of the author~\cite{Cha19}, our initial rotor configuration will be sampled from the wired spanning forest oriented toward infinity measure:
Let $G$ be a simple connected graph  that is  locally finite and transient.
Let  $Z_0\supseteq Z_1 \supseteq \ldots$ be  subsets of $V$  such that each $V \setminus Z_R$ is finite and $\bigcap_{R\geq 0} Z_R=\varnothing$.
Let $G_R$ be obtained from $G$ by identifying all vertices in $Z_R$ to one new vertex $z_R$, and let $\mu_R$ be the uniform measure on spanning trees of $G_R$ oriented toward $z_R$.
The \emph{wired spanning forest oriented toward infinity} $\owusf$ is the (unique) infinite volume limit of the measure.
We will establish in this paper that this measure is an infinite-step stationary measure for rotor walk, in a manner to be made precise.


Our starting point is the result of 
\cite[Theorem~1.1]{Cha19} that, on average, 
the rotor walk visits each vertex at most as frequently as the simple random walk, i.e.,
\begin{equation}\label{equation: odometer wsf inequality}
\Eb_{\rho \sim \owusf}[u(\rho)(x)]  \quad \leq \quad  \Gc(x)   \qquad \forall\,  x \in V,   
\end{equation}
where $u(\rho)(x)$ is the number of visits to $x$ by the rotor walk with initial rotor configuration $\rho$ sampled from $\owusf$,
and $\Gc(x)$ is the expected number of visits to $x$ by the simple random walk.

It follows from \eqref{equation: odometer wsf inequality} that, when $G$ is a transient graph, 
the aforementioned rotor walk visits each vertex only finitely many times a.s..
This  implies that the sequence of rotor configurations
$(\rho_t)_{t \geq 0}$ at the $t$-th step of the walk converges pointwise (as $t \to \infty$) to a unique rotor configuration $\sigma(\rho)$, which we refer to as  the  \emph{final rotor configuration}  of the rotor walk. 

For finite graphs, it is a folk theorem that $\sigma(\rho)$ has the same law as $\rho$ when  $\rho$ is sampled from the uniform spanning tree measure~(see \cite[Lemma~3.11]{HLM08}).
We would like to prove the same result for infinite graphs.

\begin{problem}[Infinite-step stationarity]\label{problem: stationarity}
	Let  the initial rotor configuration  be sampled from $\owusf$. Show that 
	the final rotor configuration also follows the law of $\owusf$.
\end{problem}

Somewhat surprisingly, 
the answer to Problem~\ref{problem: stationarity} can depend on the underlying graph $G$.
In \cite{Cha19},
Problem~\ref{problem: stationarity} was answered positively for  when  $G$ is the perfect $b$-ary tree ($b\geq 2$),
but was answered negatively for when  $G$ is the perfect $b$-ary tree  with an infinite path attached to its root.
The case for the integer lattice $\Zb^d$ $(d\geq 3)$ was posed as an open question in \cite[Question~9.1]{Cha19}.

In this paper we answer
Problem~\ref{problem: stationarity} positively for a large family of graphs that include $\Zb^d$.
An \emph{end} in a tree is an equivalence classes of infinite paths under the  relation where  two paths are equal if they differ by only finitely many vertices.

\begin{theorem}\label{theorem: rotor walk stationarity}
	Let $G$ be a simple connected graph that is locally finite and transient, and let $\rho$ be sampled from $\owusf$.
	Suppose that
	\begin{equation}\label{equation: 1End}
	\text{Every tree in $\owusf$ has a single  end  a.s..} \tag{1End}
	\end{equation}
	Then $\sigma(\rho)$ follows the law of $\owusf$.
\end{theorem}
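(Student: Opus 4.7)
The plan is to prove Theorem~\ref{theorem: rotor walk stationarity} by finite approximation, using the wired graphs $G_R$ that appear in the definition of $\owusf$. Since $\sigma(\rho)$ is a rotor configuration on $V$, it suffices to show that for every finite $W \subset V$ and every rotor configuration $\tau$ on $W$ one has $\Pb[\sigma(\rho)|_W = \tau] = \Pb[\rho|_W = \tau]$ with $\rho \sim \owusf$. The essential input is the folk theorem \cite[Lemma~3.11]{HLM08}, which says that on each $G_R$ the rotor walk started at $o$ and run until it first reaches $z_R$ sends $\mu_R$ to itself.

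The argument proceeds in three stages. \textbf{(i) Tail control.} Combining~\eqref{equation: odometer wsf inequality} with the transience of $G$ yields $u(\rho)(x) < \infty$ a.s.\ for every $x \in V$, so the walker visits $W$ only finitely often. Since $\bigcap_R Z_R = \varnothing$ and each $V \setminus Z_R$ is finite, the first hitting time $T_R$ of $Z_R$ exceeds the last visit time to $W$ once $R$ is large enough; hence $\sigma(\rho)|_W$ equals the rotor configuration on $W$ at time $T_R$ in the limit. \textbf{(ii) Common finite dynamics.} The trajectory of the walk up to time $T_R$ is contained in $V \setminus Z_R$, so the rotor configuration on $W$ at time $T_R$ is a deterministic function $F_R$ of the initial restriction $\rho|_{V \setminus Z_R}$. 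Exactly the same function $F_R$ describes the final rotor configuration on $W$ produced by the rotor walk on $G_R$ started from initial rotors $\tilde\rho|_{V \setminus Z_R}$, because the two walks follow the same update rule and see identical rotors until they reach $Z_R$ (respectively $z_R$). \textbf{(iii) Finite stationarity plus limit.} The folk theorem applied on $G_R$ gives that the law of $F_R(\tilde\rho|_{V \setminus Z_R})$ for $\tilde\rho \sim \mu_R$ is $\mu_R|_W$, which converges to $\owusf|_W$ as $R \to \infty$ by the definition of $\owusf$.

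What remains, and is the main obstacle, is to match the law of $F_R(\rho|_{V\setminus Z_R})$ with that of $F_R(\tilde\rho|_{V\setminus Z_R})$ in the limit; this is the step where assumption~\eqref{equation: 1End} enters. My proposal is to construct a joint coupling of $\rho \sim \owusf$ and $\tilde\rho \sim \mu_R$ via Wilson's algorithm rooted at infinity: under~\eqref{equation: 1End}, loop-erased random walks to infinity are well-defined, and both measures can be built from the same family of underlying random walks, so that $\tilde\rho(v) = \rho(v)$ at every vertex $v$ whose associated loop-erased path hits the already-constructed tree before entering $Z_R$. One then expects the (random) set of vertices actually explored by the rotor walk before time $T_R$ to be contained in this agreement set with probability tending to $1$ as $R \to \infty$, which is exactly what is needed for $F_R$ to produce the same output on both samples. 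The role of~\eqref{equation: 1End} is that without it the orientation of $\owusf$ along bi-infinite branches is not determined by local loop-erased data and the coupling breaks down, consistent with the counterexamples to Problem~\ref{problem: stationarity} recorded in~\cite{Chan18}. Combining (i)--(iii) with this coupling and passing to the limit $R \to \infty$ gives $\sigma(\rho)|_W \sim \owusf|_W$, and since $W$ was arbitrary the theorem follows.
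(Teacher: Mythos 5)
There is a genuine gap, and it sits exactly at the step you yourself flag as ``the main obstacle.'' Stages (i)--(ii) are fine and are in the same spirit as the paper's reduction (the paper does not reprove them from scratch: it invokes \cite[Theorem~6.2]{Chan18}, restated as Lemma~\ref{lemma: TFAE stationarity and trivial tail}, which converts the whole approximation scheme into the single quantitative statement \eqref{equation: stationarity condition to check} that the event $\Ec_{r,W}(a,Z_R)$ has small probability under $\owusf(Z_R)$, uniformly as $R \to \infty$). But your stage (iii) coupling claim is not just unproved, it is false as stated: the set of vertices explored by the walk before the hitting time $T_R$ of $Z_R$ necessarily contains vertices adjacent to $Z_R$, so it cannot be contained, with probability tending to $1$, in the agreement set of any coupling of $\owusf$ with $\ousf(Z_R)$ --- near the wired boundary the two measures differ by a non-vanishing amount no matter how the coupling is built. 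What one actually needs is weaker but is precisely the hard part: that with high probability the walk never returns to $W$ after visiting a point at distance $r$ from $W$, i.e.\ that $\Ec_{r,W}$ is rare. Once the trajectory can excurse far from $W$ and come back, the output on $W$ is sensitive to rotors arbitrarily close to the boundary, and no ``agreement on a large ball'' argument closes this.

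Controlling $\Ec_{r,W}$ is the entire content of the paper's proof, and it is also where \eqref{equation: 1End} really enters --- not through orientations of bi-infinite branches (under \eqref{equation: 1End} there are none), but through the finiteness of the oriented past of $W$. Concretely, the paper: bounds the number of visits to $W$ via the odometer identity and Markov's inequality (Lemma~\ref{lemma: bounded number of visits to W}); decomposes the walk at its successive visits to $W$ into walks with the enlarged sink $\overline{Z_R} = W \cup Z_R$ (Lemma~\ref{lemma: rho change to xi}); changes the initial measure from $\owusf(Z_R)$ to $\owusf(\overline{Z_R})$ at the cost of a constant via the Radon--Nikodym bound \cite[Lemma~7.5]{JR08} (Lemma~\ref{lemma: Radon-Nikodym derivatives is positive}); uses the finite-graph stationarity \cite[Lemma~3.11]{HLM08} to keep the law $\owusf(\overline{Z_R})$ along the decomposition; and finally uses Lemma~\ref{lemma: trace of the trajectory} plus \cite[Proposition~7.11]{JR08} (Lemma~\ref{lemma: two part spanning tree}), which is where \eqref{equation: 1End} is used, to conclude that a long oriented path into $W$ --- and hence a return to $W$ from distance $r$ --- is unlikely. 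Your proposal contains none of this machinery, so as written it reduces the theorem to an unproven claim that is essentially equivalent to the theorem's main difficulty; to repair it you would need to replace the Wilson-algorithm coupling step by an argument of the above type (or an equivalent bound on $\Pb[\rho_R \in \Ec_{r,W}(a,Z_R)]$).
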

Transient graphs that satisfy \eqref{equation: 1End} include $\Zb^d$ with $d\geq 3$,  vertex-transitive  graphs, and graphs with reasonable isoperimetric profile~(see~\cite{LMS08,LP16}). 
It is an open problem to determine if \eqref{equation: 1End} is a necessary condition for infinite-step stationarity (see Section~\ref{section: open problems}).

Our proof of Theorem~\ref{theorem: rotor walk stationarity} uses techniques developed by   J\'{a}rai and Redig in \cite{JR08} for studying recurrent sandpiles.
We study a new process 
where the rotor walk is now terminated upon hitting a fixed finite subset $W$ of $V$, and the initial rotor configuration  is now sampled from 
the wired spanning forest $\owusf(W)$ oriented toward $W$.
The addition of the termination rule is justified by \cite[Theorem~6.2]{Cha19}, and the modification of the initial rotor configuration is justified by
\cite[Lemma~7.5]{JR08}.
By coupling the original process with the new process,   we show that Theorem~\ref{theorem: rotor walk stationarity} will follow from showing that   
the component of $W$ in $\owusf(W)$ is finite.
This condition  in turn is a consequence of  \eqref{equation: 1End}~\cite[Proposition~7.11]{JR08}, and the proof is complete.


Here we present two interesting consequences of Theorem~\ref{theorem: rotor walk stationarity}.
The first consequence is an  upgrade of \eqref{equation: odometer wsf inequality}, and can be derived as a direct corollary of Theorem~\ref{theorem: rotor walk stationarity} and \cite[Theorem~1.1]{Cha19}.

\begin{theorem}
	Let $G$ be a simple connected graph that is locally finite, transient, and satisfies \eqref{equation: 1End}.
	Then, for all $x \in V$, 
	\[\pushQED{\qed} \Eb_{\rho \sim \owusf}[u(\rho)(x)] \quad =  \quad \Gc(x).  \qedhere \popQED\]
\end{theorem}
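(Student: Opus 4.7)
The plan is to combine the inequality $\Eb_{\rho \sim \owusf}[u(\rho)(x)] \leq \Gc(x)$ from \cite[Theorem~1.1]{Chan18} with the infinite-step stationarity of Theorem~\ref{theorem: rotor walk stationarity}. The strategy is a mass-balance analysis for the rotor odometer $u(\rho)$ that becomes exact once we know the final rotor configuration has the same law as the initial one, yielding the reverse inequality $\Eb[u(\rho)(x)] \geq \Gc(x)$.

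First I would fix the starting vertex $o$ of the walk and introduce $N(\rho)(y, x)$, the number of times the directed edge $(y, x)$ is traversed. The deterministic balance
\[u(\rho)(x) = \mathbf{1}[x = o] + \sum_{y \sim x} N(\rho)(y, x)\]
holds for every realization, and the round-robin nature of the rotor mechanism (with neighbors cycled through in fixed rotor order $y_1, \ldots, y_{\deg(y)}$) yields
\[N(\rho)(y, y_k) = \frac{u(\rho)(y)}{\deg(y)} + \Delta_k\bigl(\rho(y), \sigma(\rho)(y)\bigr),\]
where $\Delta_k \in (-1, 1)$ is an explicit correction depending only on the initial and final rotor values at $y$ and satisfying $\sum_k \Delta_k = 0$.

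Next I would take expectations under $\rho \sim \owusf$. By Theorem~\ref{theorem: rotor walk stationarity}, the pair $(\rho(y), \sigma(\rho)(y))$ has matching $\owusf$-marginals in a way that makes the $\Eb[\Delta_k]$ cancel in the balance at each $x$. Setting $f(x) := \Eb_{\rho \sim \owusf}[u(\rho)(x)]$, this produces the Green's function recursion
\[f(x) = \mathbf{1}[x = o] + \sum_{y \sim x} \frac{f(y)}{\deg(y)}.\]
Then $h := \Gc - f$ is harmonic on $V$, vanishes at infinity (both $\Gc$ and $f$ tend to $0$ on a transient graph satisfying \eqref{equation: 1End}), and is non-negative by \eqref{equation: odometer wsf inequality}. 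The minimum and maximum principles applied along an exhaustion of $V$ by finite sets then force $h \equiv 0$, yielding the desired equality.

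The crux, and the main obstacle, is establishing the cancellation $\sum_{y \sim x} \Eb[\Delta_k(\rho(y), \sigma(\rho)(y))] = 0$ rigorously. Matching marginals alone do not suffice; what is needed is the full measure-preserving property of $\rho \mapsto \sigma(\rho)$ on $\owusf$ together with control of the joint law of $(\rho(y), \sigma(\rho)(y))$. This is precisely where Theorem~\ref{theorem: rotor walk stationarity} enters, going strictly beyond the upper bound of \cite[Theorem~1.1]{Chan18}.
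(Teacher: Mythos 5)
Your route is genuinely different from the paper's, which obtains this theorem with no independent argument, as an immediate corollary of Theorem~\ref{theorem: rotor walk stationarity} combined with the exact statement of \cite[Theorem~1.1]{Chan18}; reconstructing a self-contained mass-balance proof is legitimate, but as written it has two gaps. The first is the step you yourself flag as unresolved, and your diagnosis of it is off: matching single-site marginals \emph{do} suffice. Writing the neighbors of $y$ in rotor order, $d:=\deg(y)$, and using that in a transient walk every visit to $y$ is followed by an exit, the number of exits from $y$ to $y_k$ equals $u(\rho)(y)/d+\phi_k(\sigma(\rho)(y))-\phi_k(\rho(y))$, where $\phi_k(y_i):=((k-i)\bmod d)/d$; that is, $\Delta_k$ telescopes as a difference of a fixed bounded function evaluated at the final and initial rotors of $y$. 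Hence $\Eb[\Delta_k(\rho(y),\sigma(\rho)(y))]=\Eb[\phi_k(\sigma(\rho)(y))]-\Eb[\phi_k(\rho(y))]=0$ as soon as Theorem~\ref{theorem: rotor walk stationarity} gives $\sigma(\rho)\sim\owusf$; no control of the joint law of $(\rho(y),\sigma(\rho)(y))$ is needed. Since you explicitly leave this ``crux'' unproved (and assert that marginals cannot suffice), the proposal is incomplete at its central step, even though the hole is fillable.

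The second gap is a step that fails as stated: the claim that $\Gc$ (and $f$) tend to $0$ at infinity on every transient graph satisfying \eqref{equation: 1End} is false. Take the ``tube'' graph with vertices $v_0,v_1,\ldots$ where $v_i$ and $v_{i+1}$ are joined by $2^i$ internally disjoint paths of length $2$: it is transient (finite resistance to infinity), it contains no two disjoint rays (every ray passes through all but finitely many of the cut vertices $v_i$), so \eqref{equation: 1End} holds trivially, yet $\Gc(v_n)\ge\Pb_{a}[\text{the walk escapes to infinity}]>0$ for all $n$ because every escaping trajectory must visit $v_n$. Without decay at infinity, the min/max principle along an exhaustion proves nothing, since nonnegative solutions of $h(x)=\sum_{y\sim x}h(y)/\deg(y)$ abound (e.g.\ $h=\deg$); note also that this identity says $h$ is an invariant \emph{measure} for the walk, not a harmonic function. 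The correct finish is: from $h=hP^n$ and $0\le h\le\Gc$ one gets $h(x)=\sum_y h(y)p^{(n)}(y,x)\le\sum_y\Gc(y)p^{(n)}(y,x)=\sum_{k\ge n}p^{(k)}(a,x)$, which tends to $0$ by transience, so $h\equiv 0$. With the telescoping identity above and this replacement of your maximum-principle step, your argument does go through.
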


The second consequence is a formula for the escape rate of the rotor walk:
Start the process  with an initial environment $\rho$  and with $n$ particles initially located at a fixed
vertex $a$. 
Each of these particles in turn performs rotor walk until it either returns to $a$ or
escapes to infinity.
Denote by $I(\rho,n)$ the number of particles that do not return to $a$.
The \emph{escape rate} is  $\lim_{n \to \infty} I(\rho,n)/n$, if the limit exists.
The following is a direct corollary of Theorem~\ref{theorem: rotor walk stationarity} and \cite[Theorem~1.5]{Cha19}.

\begin{theorem}
	Let $G$ be a simple connected graph that is locally finite, transient, and satisfies \eqref{equation: 1End}.
	Then, for almost every  $\rho$  sampled from $\owusf$,  
	\begin{equation}\label{equation: escape rate}
	\lim_{n \to \infty} \frac{I(\rho,n)}{n} \quad = \quad \alpha_G, 
	\end{equation} 
	where $\alpha_G$ is the probability that the simple random walk on $G$ never returns to the initial location.
	\qed
\end{theorem}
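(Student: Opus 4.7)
The plan is to deduce this escape-rate identity as an immediate combination of Theorem~\ref{theorem: rotor walk stationarity} with \cite[Theorem~1.5]{Chan18}. The latter establishes the escape-rate formula \eqref{equation: escape rate} under the hypothesis that the final rotor configuration $\sigma(\rho)$ of a single rotor walk launched from $a$ with initial environment $\rho \sim \owusf$ is again $\owusf$-distributed, and Theorem~\ref{theorem: rotor walk stationarity} supplies exactly this hypothesis whenever \eqref{equation: 1End} holds. So the proof is essentially a two-line citation chain: verify the stationarity hypothesis via Theorem~\ref{theorem: rotor walk stationarity}, then apply \cite[Theorem~1.5]{Chan18}.

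To make the reduction transparent, I would write $I(\rho,n) = \sum_{k=1}^{n} X_k$, where $X_k$ is the indicator that the $k$-th particle escapes to infinity. Because rotor walk is a deterministic function of its environment, the distributional stationarity furnished by Theorem~\ref{theorem: rotor walk stationarity} propagates across particles: after each particle terminates (either by returning to $a$ or by escaping), the rotor configuration left behind is again $\owusf$-distributed, so $(X_k)_{k \geq 1}$ is a stationary sequence under the law $\rho \sim \owusf$. An ergodic-theorem argument (carried out inside \cite[Theorem~1.5]{Chan18}) then yields almost-sure convergence of $I(\rho,n)/n$ to a deterministic constant, and that constant is identified with $\alpha_G$ by relating the expected rotor odometer to the random-walk Green's function, as encoded in the strengthened form of \eqref{equation: odometer wsf inequality} stated in the preceding theorem.

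The main substantive work has already been discharged in proving Theorem~\ref{theorem: rotor walk stationarity}; here everything reduces to bookkeeping. The one place where mild care is needed is checking that the marginal distributional stationarity supplied by Theorem~\ref{theorem: rotor walk stationarity} fits the precise hypothesis of \cite[Theorem~1.5]{Chan18}, which implicitly requires stationarity along the iterated multi-particle dynamics rather than merely for a single rotor walk. Because successive particles deterministically update the shared rotor environment, this upgrade from single-walk to multi-walk stationarity is automatic, so no additional analytic input is required beyond what is already in place.
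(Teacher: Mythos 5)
Your proposal is correct and follows essentially the same route as the paper, which treats this statement as a direct corollary obtained by feeding the stationarity conclusion of Theorem~\ref{theorem: rotor walk stationarity} into the conditional escape-rate result \cite[Theorem~1.5]{Chan18}. The additional discussion of per-particle stationarity and the ergodic averaging is material internal to the cited theorem rather than something that needs to be re-established here, so the two-line citation chain already constitutes the whole proof.
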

Theorem~\ref{theorem: rotor walk stationarity} is an upgrade of \cite[Theorem~1.4]{Cha19}, which  derived the same conclusion  but only for vertex-transitive graphs.

\subsection{Related work}
Rotor walk was first studied in \cite{WLB96} as a model of mobile agents exploring a new territoty,
and in \cite{PDDK96} as an example of  self-organized criticality~\cite{BTW88}.
It was rediscovered several times by  researchers from different disciplines~(e.g., \cite{RSW98, DTW03, Pro03}).
We refer  the reader to \cite{HLM08} for an excellent introduction and a detailed history of this subject.


The wired spanning forest was first  constructed  by Pemantle~\cite{Pem91} as an infinite volume limit of the uniform spanning tree of finite graphs.
Other methods to construct the wired spanning forest include
Wilson's algorithm~(see \cite{Wil96,BLPS01}) that uses loop-erased random walks~\cite{Law80}, and the Aldous-Broder algorithm (see \cite{Bro89,Ald90,Hut18}).
We refer the reader to \cite[Section~10]{LP16} for  a detailed discussion regarding this subject.

The phenomenon in   Theorem~\ref{theorem: rotor walk stationarity} (where   the wired spanning forest is invariant under the dynamics of the particle system)
had previously been shown for  sandpile~\cite{JR08} and (one-step) random walk with local memory~\cite{CGLL18}.
These particle systems are notably  all examples of abelian networks~\cite{BL16}.
It is still unknown if
all these results are  consequences  of a  universal  principle that holds for all  abelian networks.
%
%

%

Other rotor  configurations for which the corresponding escape rate satisfy \eqref{equation: escape rate} had been constructed for trees by Angel and Holroyd in \cite{AH11}, for $\Zb^d$ by He in \cite{He14}, and for all graphs by Chan in \cite{Cha20}.
Note that the escape rate of a rotor walk with an arbitrary initial rotor configuration is at most equal to  the escape rate of the simple random walk (see \cite[Theorem~10]{HP10}), and the former can be strictly smaller than the latter (see \cite{AH12}).

Other aspects of rotor walk
that had been studied in the literature include recurrence and transience~\cite{LL09,AH12,HMSH15}, scaling limit~\cite{LP08,LP09}, range ~\cite{FLP16,HSH20a,HSH20b}, escape rate~\cite{FGLP14}, and its performance in simulating   the simple random walk~\cite{CDST06, CS06, DF09, HP10}.

\subsection{Outline of the paper}
In Section~\ref{section: notations and definitions} we review notations and definitions.
In Section~\ref{section: tightness ouroboros} we derive the technical  lemmas that will be used to prove Theorem~\ref{theorem: rotor walk stationarity}.
In Section~\ref{section: proof of main theorem}
we prove Theorem~\ref{theorem: rotor walk stationarity}.
In Section~\ref{section: open problems} we list some open problems.

\section{Notations and definitions}\label{section: notations and definitions}
Throughout this paper, we will denote by $G:=(V,E)$  a connected simple (i.e., no loops or multiple edges) undirected graph that is locally finite (i.e., every vertex has finitely many edges).
We will always assume that $|V|$ is infinite.
We will denote by $a \in V$ the \emph{initial location} of a rotor walk.
We will denote by $Z$ the \emph{sink} of a rotor walk, which is a (possibly empty or infinite) subset of $V$.

\subsection{Rotor walk}\label{subsection: rotor walks}
Each  vertex $x \in V$ is assigned a \emph{local mechanism} $\tau_x$, 
which is a bijection on the neighbors $N(x)$ of $x$.
We assume that each local mechanism has one unique orbit (i.e., $\{\tau_x^{i}(y) \mid i\geq 0\} = N(x)$ for any neighbor $y$ of $x$).
A \emph{rotor configuration} of $G$ is a function $\rho:V \to V$ such that $\rho(x)$ is a neighbor of $x$ for any $x \in V$.


Let $a$ be a vertex of $G$, let $Z$ be a subset of $V$, and let $\rho$ be a rotor configuration of $G$.
The corresponding \emph{rotor walk} $(X_t,\rho_t)_{t \geq 0}:= (X_t(a,Z;\rho),\rho_t(a,Z;\rho))_{t \geq 0}$
is a sequence of vertices and rotor configurations  defined recursively as follows.

Define  $X_0:=a$ and $\rho_0:=\rho$;  this indicates that $a$ is the initial location of the walker, and $\rho$ is the initial rotor configuration.
At the $t$-th step of the walk, the rotor of
the current location of the walker is incremented to point to the next vertex in the cyclic order specified by its local mechanism,  and then the walker moves to the vertex specified by this new rotor, i.e.,\begin{equation}\label{equation: definition rotor walk}
\begin{split}
\rho_{t+1}(x)& \ := \ \begin{cases}
\rho_t(x) & \text{if } x\neq X_t; \\
\tau_{X_t}(\rho_t(X_t)) & \text{if } x= X_t, \end{cases}\\
X_{t+1}& \ := \ \tau_{X_t}(\rho_t(X_t)).
\end{split}
\end{equation} 
The walk is immediately terminated if the walker reaches a vertex in the {sink} $Z$.  
Note that this  rule implies that    the rotors
of  $Z$ are inconsequential to the process.
%
Therefore, we will not specify  the value of these rotors (i.e., only $\rho(x)$ for $x \in V \setminus Z$ are  given) when such practice simplifies the exposition.


The following lemma will be used in Section~\ref{subsection: the third technical lemma}.
An \emph{oriented path} in  $\rho$ is a sequence of vertices $x_0,\ldots, x_{\ell}$ such that  $\rho(x_i)=x_{i+1}$ for any $i \in \{0,\ldots,\ell-1\}$.

\begin{lemma}\label{lemma: trace of the trajectory}
	For any $t\geq 0$ and any $i\leq t$,  there exists an oriented path in $\rho_t$ that starts at $X_i$ and ends at $X_t$.
\end{lemma}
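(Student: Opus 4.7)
The plan is to proceed by induction on $t$, exploiting the fact that $\rho_{t+1}$ and $\rho_t$ agree everywhere except at $X_t$, where the single-rotor update formula gives $\rho_{t+1}(X_t)=\tau_{X_t}(\rho_t(X_t))=X_{t+1}$. The base case $t=0$ is trivial, since only $i=0$ is allowed and the one-vertex path $X_0$ suffices.

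For the inductive step, assume the claim holds at time $t$ for all $i\leq t$, and fix $i\leq t+1$. The case $i=t+1$ is again immediate. For $i\leq t$, I would take an oriented path $y_0=X_i,\,y_1,\dots,y_\ell=X_t$ in $\rho_t$ supplied by the inductive hypothesis, and attempt to extend it to $X_{t+1}$ to produce an oriented path in $\rho_{t+1}$. The appended final edge $y_\ell\to X_{t+1}$ is present in $\rho_{t+1}$ by the update formula above, and every earlier edge $y_j\to y_{j+1}$ with $y_j\neq X_t$ is preserved, since the corresponding rotor is unchanged between $\rho_t$ and $\rho_{t+1}$.

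The main (and only) obstacle is that the path given by induction could in principle revisit $X_t$ as an internal vertex $y_j$ with $j<\ell$, in which case the edge $y_j\to y_{j+1}$ would have been modified in $\rho_{t+1}$. To avoid this, I would pick a specific representative for the path from $X_i$ to $X_t$: namely the initial segment of the deterministic trajectory $X_i,\,\rho_t(X_i),\,\rho_t^2(X_i),\dots$ stopped at the first hitting time of $X_t$ (which exists precisely because the inductive hypothesis guarantees some oriented path ends at $X_t$). By construction this path meets $X_t$ only at its endpoint, so the extension argument above goes through without interference, and the induction closes.
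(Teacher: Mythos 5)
Your proof is correct. The one point that needs care — that the path supplied by the inductive hypothesis might pass through $X_t$ before its endpoint, so that the modified rotor at $X_t$ breaks an interior edge — is exactly the point you address, and your fix is sound: since $\rho_t$ is a function, any oriented path in $\rho_t$ starting at $X_i$ is an initial segment of the trajectory $X_i,\rho_t(X_i),\rho_t^2(X_i),\dots$, so the inductive hypothesis indeed forces this trajectory to hit $X_t$, the first hitting time is well defined, and the truncated path meets $X_t$ only at its terminal vertex; appending the edge $X_t\to X_{t+1}$ (valid since $\rho_{t+1}(X_t)=\tau_{X_t}(\rho_t(X_t))=X_{t+1}$) closes the induction. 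The paper argues differently: it strengthens the induction hypothesis to the pointwise claim that $\rho_t(X_i)\in\{X_{i+1},\dots,X_t\}$ for all $i<t$, proved by a two-case induction on $t$ (according to whether $X_i=X_{t-1}$), and then obtains the path by iterating this claim — following rotors from $X_i$ strictly increases the index of the visited position, so the walk along $\rho_t$ must terminate at $X_t$. The paper's strengthened invariant sidesteps the revisit issue entirely and gives slightly more information (each rotor at a visited vertex points to a strictly later position of the walk), while your version stays closer to the statement of the lemma at the cost of the first-hitting-time truncation; both are short, elementary inductions on $t$ resting on the same observation that only the rotor at $X_t$ changes at step $t$.
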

\begin{proof}
	It suffices to prove that, for any $t\geq 0$ and any $i< t$,
	\begin{equation}\label{equation: trace of the trajectory}
	\rho_t(X_i) \in \{ X_{i+1}, X_{i+2}, \ldots, X_t \}.  
	\end{equation}
	
	We will prove \eqref{equation: trace of the trajectory} by induction on $t$.
	First note that \eqref{equation: trace of the trajectory} is vacuously true for $t=0$.
	Now suppose that $t\geq 1$; there are two possible scenarios:
	\begin{itemize}
		\item If $X_i=X_{t-1}$, then it follows from \eqref{equation: definition rotor walk} that  $\rho_t(X_i)=X_{t}$; or
		\item If  $X_i\neq X_{t-1}$, then  
		\[\rho_t(X_i) \ = \ \rho_{t-1}(X_i) \ \in \  \{X_{i+1},\ldots, X_{t-1} \},\]
		where the first equality is due to \eqref{equation: definition rotor walk}, and the second equality is due to the induction assumption.
	\end{itemize}
	In both cases we have that \eqref{equation: trace of the trajectory} is true, as desired. 
\end{proof}

A rotor walk is \emph{transient} if every vertex of $G$ is visited by the walker at most finitely many times, and is \emph{recurrent} otherwise.
One aspect of the rotor walk that we will study in this paper is the final rotor configuration of a transient walk, defined as follows.

\begin{definition}[Final rotor configuration]\label{definition: final rotor configuration}
	The \emph{final rotor configuration} $\sigma(\rho):=\sigma(a,Z;\rho)$ of a transient rotor walk is  given by 
	\begin{equation*}
	\sigma(\rho)(x) \ := \ \lim_{t \to \infty} \rho_t(x) \qquad \forall \, x \in V. \qedhere
	\end{equation*}
\end{definition} 
Note that $\sigma(\rho)$ is well defined as the sequence 
$(\rho_t(x))_{t \geq 0}$ is eventually constant by the assumption that the walk is transient.




%

\subsection{Oriented wired spanning forest}\label{subsection: wired spanning forest}

All initial rotor configurations in this paper will be picked from   spanning forests oriented toward $Z \subseteq V$, defined as follows.

\begin{definition}[Oriented spanning forest]
	A \emph{$Z$-oriented spanning forest} of $G$ is an oriented subgraph $F$ of $G$ such that
	\begin{enumerate}
		\item Every vertex in $Z$ has outdegree $0$ in $F$;
		\item Every vertex in $G \setminus Z$ has outdegree 1 in $F$; and
		\item $F$ contains no oriented cycles. \qedhere
	\end{enumerate}
	%
	%
\end{definition}

Note that  each $Z$-oriented spanning forest $F$ corresponds to a rotor configuration $\rho_F$,
where for every $x \in V\setminus Z$, the state $\rho_F(x)$ is the out-neighbor of $x$ in $F$ (the state $\rho_F(x)$ for $x \in Z$ is inconsequential, as  remarked in Section~\ref{subsection: rotor walks}). 
Due to this correspondence, we will treat $\rho$ both as  a rotor configuration and as an oriented subgraph of $G$ interchangeably throughout this paper.

We denote by $\SF(Z)$ the set of $Z$-oriented spanning forests of $G$.

\begin{definition}[Oriented uniform spanning forest]\label{definition: usf}
	Suppose that $Z$ is a subset of $G$ such that $V\setminus Z$ is finite.
	The \emph{$Z$-oriented uniform spanning forest}, denoted by $\ousf(Z)$, is the uniform probability measure on $Z$-oriented spanning forests of $G$.
\end{definition}
Note that  $\ousf(Z)$ is well defined as there are only finitely many $Z$-oriented spanning forests  by the assumption that $V \setminus Z$ is finite.

Now let $Z$ be a finite subset of $V$.
This means that $V \setminus Z$ is infinite,
so Definition~\ref{definition: usf} does not apply.
In this case, the  $Z$-oriented uniform spanning forest can be defined by using the following exhaustion method. 

Throughout this paper,  $Z_0 \supseteq Z_1 \supseteq \ldots$ will be a \emph{decreasing exhaustion} of $Z$, which is an infinite sequence of decreasing subsets of $V$ such that 
\begin{equation}\label{equation: DecEx}
\text{$V\setminus Z_R$ \. is a finite set for all $R\geq 0$,  \. and  \. $\bigcap_{R\geq 0} Z_R= Z$\..} \tag{DecEx}
\end{equation}


\begin{definition}[Oriented wired uniform spanning forest]
	\label{definition: wusf}
	Suppose that $G$ is a transient graph and    $Z$ is  a finite subset of $V$.
	The \emph{$Z$-oriented wired uniform spanning forest} $\owusf(Z)$ is the (unique) probability measure on oriented subgraphs of $G$ such that, for any finite subset $B$ of oriented edges of $G$, 
	\begin{equation}\label{equation: limit definition wusf} 
	\owusf(Z)[B \subseteq F] \ = \ \lim_{R \to \infty} \ousf(Z_R) [B \subseteq {F_R}],
	\end{equation}
	where $F$ is an oriented subgraph of $G$ sampled from $\owusf(Z)$, and ${F_R}$ is an oriented subgraph of $G$ sampled from $\ousf(Z_R)$.
\end{definition}
When the sink $Z$ is the empty set, we will omit $Z$ from the notation and write $\owusf$ instead.

The limit in \eqref{equation: limit definition wusf} exists and does not depend on the choice of the decreasing exhaustion of $Z$ if $G$ is transient.
See \cite[Theorem~5.1]{BLPS01} for a proof when $Z=\varnothing$; the general case follows from a similar argument.
Note that, if $G$ is recurrent, 
the limit in \eqref{equation: limit definition wusf} can depend on the choice of  the decreasing exhaustion. (However,
only the orientation of $F$ is influenced by this choice; the underlying graph of $F$ remains unchanged!)



\section{Proof of the technical lemmas} \label{section: tightness ouroboros}

In this section, we derive three technical lemmas 
that will be used in our proof of  Theorem~\ref{theorem: rotor walk stationarity}.



\subsection{The first technical lemma}
We need the following notations to state this lemma.

\begin{definition}[Odometer]
	We denote by $u(a,Z;\rho)(x)$ the number of visits to $x\in V$ by the rotor walk with initial location $a$, sink $Z$, and initial rotor configuration $\rho$, i.e.,  
	\[u(a,Z;\rho)(x) \ := \  |\{t \geq 0 \mid  X_t(a,Z;\rho)=x \}|.   \]
\end{definition}
For any  finite subset $W$ of $V$, we write
\[ u(a,Z;\rho)(W) \ := \  \sum_{x \in W} u(a,Z;\rho)(x)\.. \]
For any  $K\geq 0$, we write
\[ \Cc_{K,W}(a,Z) \ := \ \{\,  \rho \, \mid \,  u(a,Z;\rho)(W)< K  \, \}, \]
the set of rotor configurations  for which the corresponding rotor walk visits $W$  strictly less than $K$ times.

Recall the definition of  decreasing exhaustion $(Z_R)_{R\geq 0}$ from Section~\ref{subsection: wired spanning forest}.
We now present the main lemma of this subsection, which gives a probabilistic  upper bound for the odometer. 
\begin{lemma}\label{lemma: bounded number of visits to W}
	Suppose that $G$ is a transient graph.
	Then, for any $\varepsilon >0$ and any finite  $W \subseteq V$, there exists $K:=K(\varepsilon, G, a,W,Z)$ such that, for any $R\geq 0$, 
	\[ \Pb[\rho_R \notin \Cc_{K,W}(a,Z_R)] \ \leq \ \frac{\varepsilon}{2}, \]
	where $\rho_R$ is sampled from $\owusf(Z_R)$.
\end{lemma}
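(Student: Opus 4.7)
The plan is to bound the expected number of visits to $W$ uniformly in $R$ and then conclude by Markov's inequality. Namely, let $N_R := \sum_{x \in W} u(a, Z_R; \rho_R)(x)$; it suffices to exhibit a constant $C = C(G, a, W)$, independent of $R$, with $\Eb[N_R] \leq C$. Then taking $K := \lceil 2C/\varepsilon \rceil$ gives
\[\Pb[\rho_R \notin \Cc_{K,W}(a, Z_R)] \ = \ \Pb[N_R \geq K] \ \leq \ \frac{\Eb[N_R]}{K} \ \leq \ \frac{\varepsilon}{2}.\]

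To obtain such a uniform bound, I would reduce to the finite quotient $G_R$ obtained by identifying $Z_R$ to a single sink vertex $z_R$. Because $V \setminus Z_R$ is finite, $\owusf(Z_R)$ coincides with the uniform measure on $z_R$-oriented spanning trees of $G_R$, and the rotor walk on $G$ with sink $Z_R$ is the same as the rotor walk on $G_R$ absorbed at $z_R$. The finite-graph analogue of \cite[Theorem~1.1]{Chan18}---which is in fact an equality in this setting, as follows from the rotor-walk-on-spanning-tree stationarity recalled in the introduction (see \cite[Lemma~3.11]{HLM08})---gives
\[\Eb_{\rho_R \sim \owusf(Z_R)}\bigl[u(a, Z_R; \rho_R)(x)\bigr] \ \leq \ \Gc_R(a, x),\]
where $\Gc_R(a, x)$ is the Green's function of the simple random walk on $G$ killed on hitting $Z_R$. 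Enlarging the killing set only shortens the walk, so $\Gc_R(a, x) \leq \Gc(a, x)$ for every $R$, and $\Gc(a, x) < \infty$ because $G$ is transient. Summing over the finitely many $x \in W$ supplies the constant $C := \sum_{x \in W} \Gc(a, x) < \infty$, independent of $R$.

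The only thing that requires genuine care is verifying the displayed bound on the expected odometer when the initial configuration is drawn from $\owusf(Z_R)$ and the walk has sink $Z_R$, rather than in the sinkless transient setting of \cite[Theorem~1.1]{Chan18}. This is not, however, a real obstacle: on the finite quotient $G_R$ the comparison follows from the abelian property of rotor walks---the very mechanism underpinning \cite[Theorem~1.1]{Chan18}---so once the reduction to $G_R$ is made explicit, the remainder of the proof is the one-line Markov computation displayed above.
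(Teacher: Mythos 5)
Your proposal is correct and takes essentially the same route as the paper: the identity/bound on the expected odometer that you invoke for the finite quotient is exactly \cite[Proposition~3.4]{Chan18} (restated as Lemma~\ref{lemma: odometer finite} in the paper), and the paper likewise bounds the killed Green's function uniformly in $R$ via monotonicity in the killing set and transience before finishing with Markov's inequality. The only cosmetic difference is that the paper uses the exact equality and the bound $\Gc(a,Z_R)(W)\leq\Gc(a,Z)(W)$, whereas you dominate by the full Green's function $\Gc(a,x)$, which is equally valid.
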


We will derive Lemma~\ref{lemma: bounded number of visits to W} as a consequence of the following lemma from~\cite{Cha19}.
We denote  by $\Gc(a,Z)(x)$  the expected number of visits to $x$ by the simple random walk that starts at $x$ and terminates upon hitting $Z$.

\begin{lemma}[{\cite[Proposition~3.4]{Cha19}}]\label{lemma: odometer finite}
	Suppose that $Z$ is a nonempty subset of $V$ such that $V \setminus Z$ is finite.
	Then, for any $x \in V$,
	\[ \Eb[u(a,Z;\rho)(x)] \ =  \ \Gc(a,Z)(x), \]
	where $\rho$ is sampled from $\owusf(Z)$. \qed
\end{lemma}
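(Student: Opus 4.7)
The plan is to reduce the tail bound on $\{\rho_R \notin \Cc_{K,W}(a,Z_R)\}$ to Markov's inequality applied to the random variable $N_R := u(a,Z_R;\rho_R)(W) := \sum_{x \in W} u(a,Z_R;\rho_R)(x)$, which counts the total number of visits to $W$ by the rotor walk started at $a$ with sink $Z_R$. By the very definition of $\Cc_{K,W}$, the event $\{\rho_R \notin \Cc_{K,W}(a,Z_R)\}$ coincides with $\{N_R \geq K\}$, so it suffices to produce a bound on $\Eb[N_R]$ that is uniform in $R$.

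First I would invoke Lemma~\ref{lemma: odometer finite}, which applies because the decreasing exhaustion condition \eqref{equation: DecEx} guarantees that $V \setminus Z_R$ is finite. This identifies $\Eb[u(a,Z_R;\rho_R)(x)]$ with the Green's function $\Gc(a,Z_R)(x)$ of the simple random walk from $a$ killed upon hitting $Z_R$, and summing over the finite set $W$ gives $\Eb[N_R] = \sum_{x \in W} \Gc(a,Z_R)(x)$. Next I would bound each summand uniformly in $R$ via the monotonicity $\Gc(a,Z_R)(x) \leq \Gc(a,\varnothing)(x)$, which holds because enlarging the killing set can only decrease the expected time the walk spends at $x$. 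Transience of $G$ then ensures that $\Gc(a,\varnothing)(x) < \infty$ for every $x$, and finiteness of $W$ yields $M := \sum_{x \in W} \Gc(a,\varnothing)(x) < \infty$.

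Combining these inputs with Markov's inequality gives
\[\Pb[N_R \geq K] \ \leq \ \frac{\Eb[N_R]}{K} \ \leq \ \frac{M}{K},\]
so taking $K := \lceil 2M / \varepsilon \rceil$ completes the proof, with $K$ depending only on $\varepsilon, G, a, W$. The argument is essentially routine; I do not expect a substantive obstacle. The only nontrivial ingredient beyond Lemma~\ref{lemma: odometer finite} is the monotonicity of Green's functions in the killing set, which is precisely what permits the passage from an $R$-dependent expectation to a universal upper bound expressed in terms of the unrestricted Green's function on the transient graph $G$.
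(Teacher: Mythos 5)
Your write-up does not prove the stated lemma; it proves a different one. The statement in question is the exact identity $\Eb[u(a,Z;\rho)(x)]=\Gc(a,Z)(x)$ for $\rho$ sampled from $\owusf(Z)$ when $V\setminus Z$ is finite, i.e.\ Lemma~\ref{lemma: odometer finite} itself. What you have written is an argument for the tail estimate of Lemma~\ref{lemma: bounded number of visits to W} (the Markov-inequality bound on $\Pb[\rho_R\notin\Cc_{K,W}(a,Z_R)]$), and in the very first step you \emph{invoke} Lemma~\ref{lemma: odometer finite} as a known input. With respect to the actual target this is circular: no part of your argument addresses why the expected odometer of the rotor walk, averaged over $\owusf(Z)$, equals the Green's function of the simple random walk killed at $Z$.

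That identity is a substantive result -- in the present paper it is imported without proof as \cite[Proposition~3.4]{Chan18} -- and proving it requires genuinely different tools: one must relate the rotor walk started from a spanning-forest-distributed rotor configuration to the simple random walk, e.g.\ via the abelian property of rotor walks together with the correspondence between tree-sampled rotor configurations and (loop-erased) random walk in the spirit of Wilson's algorithm / cycle popping. Nothing of this kind appears in your proposal; the only ingredients you use beyond the lemma itself (summation over the finite set $W$, monotonicity of $\Gc(a,Z_R)(x)$ in the killing set, transience, Markov's inequality) are all about uniformity in $R$, which is the content of the \emph{other} lemma. So as a proof of the stated lemma there is a complete gap: the key identity is assumed rather than established.
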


\begin{proof}[Proof of Lemma~\ref{lemma: bounded number of visits to W}]
	We have for any $K\geq 0$ that
	\begin{equation}\label{equation: Markov inequality}
	\begin{split}
	\Pb[u(a,Z_R; \rho_R)(W)\geq K] \quad \leq& \quad \frac{\Eb[u(a,Z_R; \rho_R)(W)]}{K} 
	\quad = \quad   \frac{\Gc(a,Z_R)(W)}{K}, 
	\end{split}
	\end{equation}
	where the inequality is due to Markov's inequality, and the equality is due to Lemma~\ref{lemma: odometer finite}.
	Now note that $\Gc(a,Z_R)(W)$ increases  to $\Gc(a, Z)(W)$ as $R \to \infty$ by \eqref{equation: DecEx}.
	The latter is a finite number as $G$ is transient.
	
	Now choose $K:=\frac{2 \Gc(a,Z)(W)}{\varepsilon}$.
	Substituting this value of $K$ into \eqref{equation: Markov inequality}, 
	\begin{equation*}
	\begin{split}
	\Pb[u(a,Z_R; \rho_R)(W)\geq K] \quad \leq  \quad \frac{\varepsilon}{2} \frac{\Gc(a,Z_R)(W)}{\Gc(a,Z)(W)} \quad \leq \quad \frac{\varepsilon}{2}.
	\end{split}
	\end{equation*}
	This proves the claim.
\end{proof}

\subsection{The second technical lemma}
We need the following notations to state this lemma.
For any $x, y \in V$,
we denote by $d_G(x,y)$ the graph distance between $x$ and $y$ in $G$.
For any $W \subseteq V$, we write $d_G(x,W) \ := \ \min_{y \in W} \{d_G(x,y) \}$\..
\begin{definition}\label{definition: weird rotor configurations}
	For any $r\geq 0$ and any $W \subseteq V$, 
	we denote by $\Ec_{r,W}(a,Z)$   the set of rotor configurations $\rho$ for which  there exists $s_2 > s_1 \geq 0$ so that
	
	\[d_G(W,X_{s_1}(a,Z;\rho)) \ \geq \ r\.; \qquad \text{and} \qquad X_{s_2}(a,Z;\rho) \. \in \. W. \qedhere \]
	%
	%
\end{definition}

That is to say, consider the rotor walk with initial location $a$, sink $Z$, and initial rotor configuration $\rho$.
Then $\rho$ is contained in $\Ec_{r,W}(a,Z)$  if this  rotor walk ends up visiting $W$ some time after visiting a point that is of (graph) distance $r$ from $W$. 
Understanding the event $\Ec_{r,W}$ will be crucial in proving Theorem~\ref{theorem: rotor walk stationarity}.

\begin{definition}
	Suppose that $Z$ is a nonempty subset of $V$ such that $V \setminus Z$ is finite, 
	$W$ is any  subset of $V$,  and  $\overline{Z}:=W \cup Z$.
	For any (not necessarily distinct) vertices $a_0,a_1,a_2,\ldots$ of $G$ and any rotor configuration $\rho$, 
	we denote by $\xi_i:=\xi_{i}(a_0,a_1,\ldots, a_{i-1}, \overline{Z};\rho)$  the rotor configuration
	\begin{equation}\label{equation: definition xi}
	\xi_i \ := \ \begin{cases} \rho & \text{ if }i=0;\\ \sigma(a_{i-1},\overline{Z};\xi_{i-1})  & \text{ if } i \geq 1,\end{cases} 
	\end{equation}
	where each $\xi_i$ is  well defined because the corresponding  rotor walk terminates in a finite time by the assumption on $Z$.
\end{definition}
Described in words, we let multiple walkers in turn perform a rotor walk 
that terminates upon hitting the enlarged sink $\overline{Z}$, 
with the $i$-th walker starting its walk from $a_i$,
and with  
$\xi_i$ being  the final rotor configuration after the $i$-th walker finishes its walk.  

Recall that, for any subset $W$ of $V$, the {neighbor set} $N(W)$ of $W$  is the set of vertices of $G$ that are adjacent to a vertex in $W$.  
We now present the main lemma of this subsection. 

\begin{lemma}\label{lemma: rho change to xi}
	Suppose that $Z$ is a nonempty subset of $V$ such that $V \setminus Z$ is finite, 
	$W$ is any  subset of $V$,  and  $\overline{Z}:=W \cup Z$.
	Suppose that $\rho$ is contained in $\Ec_{r,W}(a,Z)$ for some $r\geq 0$.
	Then   there exists $i \in \{0,1,\ldots, u(a,Z;\rho)-1\}$ and  $a_1,\ldots, a_{i} \in N(W)$ such that 
	\[ \xi_{i}(a,a_1,\ldots, a_{i-1}, \overline{Z};\rho)  \text{ \. is contained in \. }  \Ec_{r,W}(a_{i},\overline{Z}). \]
\end{lemma}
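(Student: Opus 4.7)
My plan is to exploit the abelian structure of rotor walks in order to decompose the original rotor walk (with sink $Z$, starting at $a$ from configuration $\rho$) into a sequence of shorter walks, each of which terminates at the enlarged sink $\overline{Z} = W \cup Z$. Because the original walk witnesses $\Ec_{r,W}$, one of these shorter walks must inherit this property, and it will provide the required index $i$ and base vertex $a_i$.

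For the setup I enumerate the times $0 < t^{(1)} < \cdots < t^{(m)}$ at which $(X_t(a,Z;\rho))_{t\geq 0}$ visits $W$ (at least one such time exists, since $\rho \in \Ec_{r,W}(a,Z)$ forces a $W$-visit); I put $a_0 := a$ and $a_j := X_{t^{(j)}+1}(a,Z;\rho)$ for $j = 1, \ldots, m$. Each $a_j$ with $j \geq 1$ lies in $N(W)$, since $X_{t^{(j)}} \in W$ and $X_{t^{(j)}+1}$ is a graph neighbor of it. The key technical step is an induction on $j$ showing that the $j$-th walk in the process of \eqref{equation: definition xi}, i.e.\ the walk taking $\xi_{j-1}$ to $\xi_j$ starting at $a_{j-1}$, retraces the trajectory of the original walk from time $t^{(j-1)}$ (with $t^{(0)} := 0$) through time $t^{(j)}$, and in particular terminates at $X_{t^{(j)}} \in W$. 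I expect the main obstacle to be bookkeeping of a small discrepancy between $\xi_{j-1}$ and the corresponding original configuration at the vertex $X_{t^{(j-1)}} \in W$: the $(j-1)$-th modified walk terminates at this vertex without incrementing its rotor, whereas the original walk does increment. The discrepancy is confined to $W$, and because every vertex of $W$ is a sink in every subsequent modified walk, the rotor values on $W$ never influence the dynamics; the induction therefore closes.

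To conclude I locate the witness excursion. By definition of $\Ec_{r,W}(a,Z)$ there exist $t_1 < t_2$ with $d_G(W, X_{t_1}) \geq r$ and $X_{t_2} \in W$, so $t_2 = t^{(k)}$ for some $k \leq m$. When $r \geq 1$ (the case $r = 0$ being immediate), $d_G(W, X_{t_1}) \geq 1$ forces $X_{t_1} \notin W$, hence $t_1$ lies strictly between two consecutive $W$-visits $t^{(j-1)}$ and $t^{(j)}$ for some $j \leq k$. The induction above then gives that the $j$-th modified walk follows the original walk on $[t^{(j-1)}, t^{(j)}]$: it reaches distance $\geq r$ from $W$ at the step corresponding to $t_1$ and then terminates at $X_{t^{(j)}} \in W$. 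Therefore $\xi_{j-1}(a, a_1, \ldots, a_{j-2}, \overline{Z}; \rho) \in \Ec_{r,W}(a_{j-1}, \overline{Z})$, and setting $i := j - 1 \in \{0, 1, \ldots, m - 1\}$ yields the desired conclusion, with the bound $i \leq u(a,Z;\rho) - 1$ holding since the number of $W$-visits does not exceed the total odometer of the original walk.
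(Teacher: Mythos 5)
Your proposal is correct and follows essentially the same route as the paper's proof: decompose the walk at its successive $W$-visit times, take $a_j := X_{t^{(j)}+1} \in N(W)$, show by induction that each modified walk with enlarged sink $\overline{Z}$ emulates the original trajectory between consecutive $W$-visits (the paper's equation \eqref{equation: emulation}), and locate the witness excursion in one such segment. Your explicit handling of the rotor discrepancy at vertices of $W$ (harmless because $W$ lies in the sink of the modified walks) is a point the paper leaves implicit.
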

Intuitively speaking, Lemma~\ref{lemma: rho change to xi} lets  us  enlarge the sink of the rotor walk to include $W$ when checking if the event $\Ec_{r,W}$ occurs.
This will simplify the analysis of the event $\Ec_{r,W}$ in Section~\ref{section: proof of main theorem}.

\begin{proof}[Proof of Lemma~\ref{lemma: rho change to xi}]
	Let $k:=u(a,Z;\rho)$, and let 
	$t_1,t_2,\ldots, t_k$ be the times of visits to $W$ by the (original) rotor walk $(X_t(a,Z;\rho))_{t\geq 0}$, i.e.,  
	\begin{align*}
	t_i& \ := \ \begin{cases} -1 & \text{ if } i=0;\\
	\min \{\,  t >t_{i-1}  \, \mid \, X_t(a,Z;\rho)\in W \, \} & \text{ if } i \in \{1,\ldots,k\}.
	\end{cases}
	\end{align*}
	We now make the following choice of $a_0, a_1,\ldots, a_k$:
	\begin{align*}
	a_i \ = \  \begin{cases}
	a & \text{ if } i=0;\\
	X_{t_i+1} & \text{ if } i\in \{1,2,\ldots,k\}.
	\end{cases}
	\end{align*}
	Note that all $a_1,a_2,\ldots, a_k$ are  contained in $N(W)$ by \eqref{equation: definition rotor walk}, as they are the vertices visited by the walker right after it reaches $W$.  
	
	By making this choice,
	it now follows (from induction) that, for any $i \in \{0,1,\ldots,k-1\}$ and any $s \in \{1,2,\ldots, t_{i+1}-t_{i}\}$, 
	\begin{equation}\label{equation: emulation}
	X_{s}(a_{i},\overline{Z};\xi_{i}) \ = \ X_{t_{i}+s}(a,Z;\rho).
	\end{equation}
	
	%
	
	Now suppose that $\rho$ is contained in  $\Ec_{r,W}(a,Z)$.
	This means that the original rotor walk  $(X_t(a,Z;\rho))_{t\geq 0}$ ends up visiting $W$ some time after visiting a point that is at distance $r$ from $W$.
	Suppose that this visit to $W$ is the $i+1$-th visit to $W$
	(note that $i< k$ as there are at most $k$ visits).  
	By \eqref{equation: emulation},
	it follows that the same event (i.e., visiting $W$ after visiting a point that is at distance $r$ from $W$) also occurs for the   modified rotor walk $(X_s(a_{i},\overline{Z};\xi_{i}))_{s\geq 0}$, which is equivalent to $\xi_{i} \in \Ec_{r,W}(a_{i},\overline{Z})$.
	This proves the lemma.
\end{proof}

\subsection{The third technical lemma}\label{subsection: the third technical lemma}
Throughout the rest of this paper,
we will often require $G$ to satisfy the assumption \eqref{equation: 1End}, which we restate here for the convenience of the reader:

\begin{equation}
\text{Every tree in $\owusf(\varnothing)$ has a single  end  a.s..} \tag{1End}
\end{equation}



We now state the main lemma of this subsection. 
\begin{lemma}\label{lemma: tightness ouroboros}
	Suppose that $G$ is a transient graph for which \eqref{equation: 1End} holds, and $Z$ is nonempty and finite.
	Then, for any $\varepsilon>0$, there exists a positive $r:=r(\varepsilon,G,Z)$ such that
	\[ \lim_{R \to \infty}  \Pb[\rho_R \in \Ec_{r,Z}(a,Z_R) ] \quad \leq  \quad \varepsilon, \]
	where  $\rho_R$ is sampled from $\owusf(Z_R)$.
\end{lemma}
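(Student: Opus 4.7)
The plan is three-fold: (i) use the finite-sink stationarity of rotor walk to identify the law of the final rotor configuration $\sigma(\rho_R)$ with $\owusf(Z_R)$; (ii) apply Lemma~\ref{lemma: trace of the trajectory} to show that the event $\Ec_{r,Z}(a,Z_R)$ forces $\sigma(\rho_R)$ to contain a long oriented path into $Z$; and (iii) pass to the limit $R\to\infty$ and invoke \cite[Proposition~7.11]{JR08}, which gives the almost sure finiteness of the component of $Z$ in $\owusf(Z)$ under \eqref{equation: 1End}.

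For step (i), since $V\setminus Z_R$ is finite, the rotor walk from $a$ with sink $Z_R$ terminates in finite time, and $\rho_R$ is a $Z_R$-oriented spanning forest. The classical finite-sink stationarity (\cite[Lemma~3.11]{HLM08} applied to the finite multigraph $G_R$ obtained by contracting $Z_R$; equivalently \cite[Theorem~6.2]{Chan18}) gives $\sigma(\rho_R)\sim\owusf(Z_R)$. For step (ii), on the event $\{\rho_R\in\Ec_{r,Z}(a,Z_R)\}$ the walker visits $X_{t_1}$ with $d_G(Z,X_{t_1})\geq r$ and later $X_{t_2}\in Z$; since $Z\subseteq Z_R$ lies inside the sink, $t_2$ must coincide with the termination time $T$, and Lemma~\ref{lemma: trace of the trajectory} at $t=T$ yields an oriented path in $\sigma(\rho_R)=\rho_T$ from $X_{t_1}$ to $X_T\in Z$. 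Writing $T_Z(F):=\{v\in V:\text{the $F$-oriented path from $v$ ends in $Z$}\}$, such a path has length $\geq r$ and therefore forces $|T_Z(\sigma(\rho_R))|>r$; combined with (i),
\[
\Pb[\rho_R\in\Ec_{r,Z}(a,Z_R)] \ \leq \ \Pb_{F_R\sim\owusf(Z_R)}\bigl[\,|T_Z(F_R)|>r\,\bigr].
\]

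For step (iii), a Wilson's-algorithm coupling driven by a common family of simple random walks shows, vertex by vertex, that $\{v\in T_Z(F_R)\}=\{\text{SRW from $v$ hits $Z$ before $Z_R\setminus Z$}\}$ is monotonically increasing in $R$ and converges upward to $\{v\in T_Z(F)\}=\{\text{SRW from $v$ hits $Z$ eventually}\}$ for $F\sim\owusf(Z)$, yielding (after promoting to a joint coupling) $|T_Z(F_R)|\nearrow|T_Z(F)|$; by monotone convergence,
\[
\lim_{R\to\infty}\Pb\bigl[|T_Z(F_R)|>r\bigr] \ = \ \Pb\bigl[|T_Z(F)|>r\bigr].
\]
By \cite[Proposition~7.11]{JR08} the hypothesis \eqref{equation: 1End} gives $|T_Z(F)|<\infty$ almost surely, so the right-hand side tends to $0$ as $r\to\infty$. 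Choosing $r$ large enough that $\Pb[|T_Z(F)|>r]\leq\varepsilon$ finishes the proof.

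The main obstacle I anticipate is executing the monotone coupling in step (iii) at the full forest level: while the single-vertex comparison via SRW hitting times is transparent, upgrading it to a simultaneous coupling of $F_R$ for all $R$ requires care in handling Wilson's ordering, because the LERWs interact through an evolving partial forest. If the direct coupling proves delicate, an alternative is to use weak convergence $\owusf(Z_R)\Rightarrow\owusf(Z)$ together with the observation that any $Z$-component of size $\leq N$ is automatically contained in the ball $B_N:=\{v:d_G(Z,v)<N\}$, so that $\{T_Z\not\subseteq B_r\}\subseteq\{|T_Z|>N\}$ for $r>N$ and the truncation $\{|T_Z|>N\}\cap\{T_Z\subseteq B_N\}$ is a cylinder event in $B_N$; the almost sure finiteness from \eqref{equation: 1End} then closes the argument.
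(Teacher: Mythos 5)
Your steps (i) and (ii) reproduce the paper's reduction exactly: finite-sink stationarity (Lemma~\ref{lemma: rotor walk stationarity for finite graph}) together with Lemma~\ref{lemma: trace of the trajectory} converts the event $\Ec_{r,Z}(a,Z_R)$ into the statement that a forest $F_R\sim\owusf(Z_R)$ contains a long oriented path into $Z$ (the paper's event $\Dc_r$, your $\{|T_Z(F_R)|\geq r\}$). The divergence, and the gap, is in step (iii). The paper at this point simply invokes Lemma~\ref{lemma: two part spanning tree} (J\'arai--Redig, Proposition~7.11), which is stated there in exactly the finite-volume form needed, namely $\lim_{R\to\infty}\Pb[\rho_R\in\Dc_r]\leq\varepsilon$ for all large $r$; you instead take from that reference only the statement ``$|T_Z(F)|<\infty$ a.s.\ under $\owusf(Z)$'' and try to build the bridge back to the finite-$R$ limit yourself. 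Your primary bridge does not work as described: the identity $\{v\in T_Z(F_R)\}=\{\text{simple random walk from }v\text{ hits }Z\text{ before }Z_R\setminus Z\}$ holds only in distribution for the branch of a single vertex (the first vertex in Wilson's ordering); under any one Wilson coupling of the whole forest, later branches are stopped on the already-built partial forest, so whether $v\in T_Z(F_R)$ is not determined by $v$'s own walk, and no coupling making $T_Z(F_R)$ increase as a set in $R$ is exhibited (it is not clear one exists). Consequently the asserted limit $\lim_R\Pb[|T_Z(F_R)|>r]=\Pb[|T_Z(F)|>r]$, which concerns a non-cylinder event to which weak convergence does not directly apply, is unproved.

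Your fallback sketch contains the germ of a correct fix but is not assembled into one: the truncated event $\{|T_Z|>N\}\cap\{T_Z\subseteq B_N\}$ is indeed local, but bounding it does not bound $\Pb[\rho_R\in\Dc_r]$, and $\{|T_Z|>N\}$ by itself is again non-local. A workable version is: for each finite $S$ the event $\{T_Z=S\}$ is a cylinder event (every vertex of $S$ has an oriented path to $Z$ inside $S\cup Z$, and no vertex in the outer boundary of $S\cup Z$ points into $S\cup Z$), hence $\{T_Z\subseteq B_N\}$ is a cylinder event; almost-sure finiteness of $T_Z$ under $\owusf(Z)$ gives $\owusf(Z)[T_Z\subseteq B_N]\geq 1-\varepsilon$ for some $N$, the convergence in Definition~\ref{definition: wusf} transfers this to $\limsup_R\Pb[T_Z(F_R)\not\subseteq B_N]\leq\varepsilon$, and $\Dc_r\subseteq\{T_Z\not\subseteq B_N\}$ for $r>N$ yields the one-sided bound that is all you need. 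As written, however, step (iii) is a genuine gap: you are in effect re-proving the paper's Lemma~\ref{lemma: two part spanning tree}, and neither of your two routes completes that proof.
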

(Note  that the choice of the constant $r$ in Lemma~\ref{lemma: tightness ouroboros} does not depend on the initial location $a$.)
Intuitively speaking, Lemma~\ref{lemma: tightness ouroboros} says that
$\Ec_{r,W}$ is   a rare event with the right choice of the sink.

Our proof of  Lemma~\ref{lemma: tightness ouroboros} uses two technical lemmas from \cite{JR08} and \cite{HLM08}; we restate them here for the convenience of the reader.

We denote by $\Dc_r$ the set of rotor configurations with an oriented path that starts at a point at distance $r$ from $Z$ and ends in $Z$, i.e.,
\[ \Dc_r \ := \ \{\, \rho \, \mid  \, \exists \ x \in V, \, \ell \geq 0   \ \text{  s.t. } \ d_G(x,Z)=r  \,  \text{ and } \, \rho^{\ell}(x) \in Z  \, \}. \]


\begin{lemma}{\cite[Proposition~7.11]{JR08}}\label{lemma: two part spanning tree}
	Suppose that $G$ is a transient graph for which \eqref{equation: 1End} holds, and $Z$ is nonempty and finite.
	Then, for any $\varepsilon>0$, there exists a positive $r:=r(\varepsilon,G,Z)$ such that
	\[ \lim_{R \to \infty}  \Pb[\rho_R \in \Dc_r ] \quad \leq \quad  \varepsilon, \]
	where  $\rho_R$ is sampled from $\owusf(Z_R)$. \qed
\end{lemma}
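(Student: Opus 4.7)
The plan is to reduce Lemma~\ref{lemma: tightness ouroboros} to Lemma~\ref{lemma: two part spanning tree} by trading the trajectorial event $\Ec_{r,Z}(a,Z_R)$ for the purely configurational event $\Dc_r$. The two ingredients this requires are the path-tracing property of rotor walks supplied by Lemma~\ref{lemma: trace of the trajectory}, and the folk theorem that the $\owusf(Z_R)$ measure is stationary for the rotor walk with sink $Z_R$ (which applies here because $V\setminus Z_R$ is finite, so the walk effectively lives on the finite graph obtained by contracting $Z_R$ to a single vertex $z_R$, and $\owusf(Z_R)$ restricts to the $z_R$-oriented uniform spanning tree of that finite graph).

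The first step is trajectorial. Assume $\rho_R\in\Ec_{r,Z}(a,Z_R)$, so that there exist $t_1<t_2$ with $d_G(Z,X_{t_1})\geq r$ and $X_{t_2}\in Z$. Since $Z\subseteq Z_R$, the walker terminates the instant it enters $Z$, so $t_2$ is the termination time and $\rho_{t_2}=\sigma(a,Z_R;\rho_R)$. By Lemma~\ref{lemma: trace of the trajectory}, $\rho_{t_2}$ carries an oriented path $P$ from $X_{t_1}$ to $X_{t_2}$. Consecutive vertices along $P$ are adjacent, hence their graph distances to $Z$ differ by at most one; since the distance drops from at least $r$ to $0$, there must be an intermediate vertex $y$ on $P$ with $d_G(y,Z)=r$, and the tail of $P$ starting at $y$ gives an oriented path in $\sigma(a,Z_R;\rho_R)$ from $y$ into $Z$. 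We conclude that on $\{\rho_R\in\Ec_{r,Z}(a,Z_R)\}$ one has $\sigma(a,Z_R;\rho_R)\in\Dc_r$.

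The second step is the stationarity comparison. By the folk theorem for finite graphs (\cite[Lemma~3.11]{HLM08}; cf.\ also \cite[Lemma~7.5]{JR08} used in the introduction), the final rotor configuration of the rotor walk with sink $Z_R$ started from $\rho_R\sim\owusf(Z_R)$ is itself distributed as $\owusf(Z_R)$. Combining with the first step,
\[
\Pb\bigl[\rho_R\in\Ec_{r,Z}(a,Z_R)\bigr]\ \leq\ \Pb\bigl[\sigma(a,Z_R;\rho_R)\in\Dc_r\bigr]\ =\ \Pb\bigl[\rho_R'\in\Dc_r\bigr],
\]
where $\rho_R'\sim\owusf(Z_R)$. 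Passing to the limit $R\to\infty$ and applying Lemma~\ref{lemma: two part spanning tree} with the same $\varepsilon$ finishes the argument; the choice of $r$ there depends only on $\varepsilon$, $G$, and $Z$, which automatically yields the desired $a$-independence of $r$ highlighted after the statement.

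The main obstacle, I expect, is the second step: even though finite-graph stationarity of rotor walk is classical, one has to be careful that the identification of the rotor walk on $G$ with sink $Z_R$ as a walk on the finite contraction $G_R$, and the identification of $\owusf(Z_R)$ with the uniform spanning tree of $G_R$ oriented toward $z_R$, are genuinely one-to-one (including the irrelevance of the rotor at the sink). The trajectorial step is almost automatic once Lemma~\ref{lemma: trace of the trajectory} is in hand, and the final application of Lemma~\ref{lemma: two part spanning tree} is essentially mechanical.
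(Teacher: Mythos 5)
Your proposal does not prove the stated lemma; it proves a different one. The statement at hand is Lemma~\ref{lemma: two part spanning tree} itself (the estimate $\lim_{R\to\infty}\Pb[\rho_R\in\Dc_r]\leq\varepsilon$ under \eqref{equation: 1End}, quoted from \cite[Proposition~7.11]{JR08}), whereas what you wrote is a derivation of Lemma~\ref{lemma: tightness ouroboros} \emph{from} Lemma~\ref{lemma: two part spanning tree}: your argument explicitly ends by ``applying Lemma~\ref{lemma: two part spanning tree} with the same $\varepsilon$.'' As a proof of the target statement this is circular --- you have assumed exactly what was to be shown. (Incidentally, your two steps, the path-tracing reduction via Lemma~\ref{lemma: trace of the trajectory} and the finite-graph stationarity from \cite[Lemma~3.11]{HLM08}, are precisely the paper's own proof of Lemma~\ref{lemma: tightness ouroboros}, so that part is fine, just aimed at the wrong lemma.)

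The missing content is the purely spanning-forest estimate: why does \eqref{equation: 1End} force long oriented paths into $Z$ to be unlikely under $\owusf(Z_R)$ in the $R\to\infty$ limit? Nothing in your proposal touches this. The needed argument (this is what \cite[Proposition~7.11]{JR08} supplies) is that under the one-end hypothesis the set of vertices whose oriented forest path leads into the finite set $Z$ --- equivalently, the ``past'' of $Z$ in the $Z$-wired forest --- is almost surely finite, so the probability that some vertex at distance $r$ from $Z$ connects to $Z$ by an oriented path tends to $0$ as $r\to\infty$, uniformly over the exhaustion. Establishing this requires working with the structure of the wired spanning forest itself (e.g., comparing $\owusf(Z)$ with $\owusf(\varnothing)$, or using Wilson's method rooted at infinity, and invoking that one-endedness makes the past of any vertex finite), and then checking that the singleton-$Z$ argument of \cite{JR08} extends to a general nonempty finite $Z$, which is the only point the paper itself adds. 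None of this can be bypassed by the trajectorial reduction you gave.
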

Intuitively speaking, Lemma~\ref{lemma: two part spanning tree} says that a typical $Z_R$-oriented spanning forest does not have a very long oriented path  that ends in $Z$.
We remark that Lemma~\ref{lemma: two part spanning tree} was stated in \cite{JR08} only for the case when $Z$ is a singleton, but the proof of the general case follows a similar argument.



\begin{lemma}[{\cite[Lemma~3.11]{HLM08}}]\label{lemma: rotor walk stationarity for finite graph}
	Suppose that $Z$ is  a nonempty subset of $V$ such that $V\setminus Z$ is finite. 
	If $\rho$ is
	sampled from $\owusf(Z)$, then  $\sigma(a,Z;\rho)$ 
	follows the law of $\owusf(Z)$. \qed
\end{lemma}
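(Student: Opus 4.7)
The plan is to show that the final-configuration map $\Phi_a : \SF(Z) \to \SF(Z)$, $\rho \mapsto \sigma(a,Z;\rho)$, is a bijection on the finite set $\SF(Z)$. This suffices because, when $V \setminus Z$ is finite, $\owusf(Z)$ reduces to the uniform measure on $\SF(Z)$ (apply Definition~\ref{definition: wusf} with the trivial exhaustion $Z_R \equiv Z$, together with Definition~\ref{definition: usf}), and any bijection of a finite set preserves its uniform measure.

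I would first verify that $\Phi_a$ is well-defined. Termination of the rotor walk in finite time is classical on finite graphs with nonempty sink: since the state space of pairs (walker location in $V\setminus Z$, rotor configuration) is finite, the deterministic dynamics is eventually periodic, and if the walker never reached $Z$ then chasing a closed periodic trajectory via Lemma~\ref{lemma: trace of the trajectory} would exhibit an oriented cycle in $\rho$ disjoint from $Z$, contradicting $\rho \in \SF(Z)$. For the final configuration $\sigma$ to lie in $\SF(Z)$, outdegrees of vertices in $V \setminus Z$ are automatically preserved, so only acyclicity must be checked: if $C$ were an oriented cycle in $\sigma$, taking the last time $t$ at which the walker visits $C$ would yield $X_{t+1} = \sigma(X_t) \in C$, forcing a strictly later visit to $C$, a contradiction.

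The main obstacle is proving injectivity of $\Phi_a$; once established, bijectivity follows automatically from $|\SF(Z)| < \infty$. The cleanest route I would take is to invoke the rotor-router group of $(G,Z)$, which is known to act freely and transitively on $\SF(Z)$ in this finite setting. In that framework, $\Phi_a$ coincides with the action of a distinguished group element---namely the one corresponding to routing a single chip from $a$ to the sink---hence is automatically a bijection. An alternative, more hands-on approach is to construct the inverse explicitly via a reverse rotor walk, begun at the endpoint $b \in Z$ of the forward walk (which can be read off $\sigma$ as the terminal vertex of the unique oriented path from $a$ to $Z$ in $\sigma$, guaranteed by Lemma~\ref{lemma: trace of the trajectory}), and retracing step by step using the inverse local mechanisms $\tau_x^{-1}$. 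The subtle point in this second route is identifying the correct forward-predecessor at each reverse step among possibly several candidates; this is essentially equivalent to the abelian property of rotor walks, and I expect it to be the main technical work.
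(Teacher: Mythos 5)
Your overall strategy is the right one and is essentially the route behind the cited result: the paper gives no argument of its own for this lemma (it defers entirely to \cite[Lemma~3.11]{HLM08}), and there the statement is exactly a consequence of the fact that routing one chip from $a$ to the sink permutes the set of sink-oriented spanning trees, i.e.\ of the rotor-router action being a (free, transitive) group action. Your reduction of $\owusf(Z)$ to the uniform measure $\ousf(Z)$ on the finite set $\SF(Z)$ (via the constant exhaustion), your check that $\sigma(a,Z;\rho)\in\SF(Z)$ via the last-visit argument (the only omitted case, a cycle never visited by the walker, is immediate since its rotors equal those of $\rho$), and your observation that a bijection of a finite set preserves uniform measure are all correct. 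Be aware, though, that the bijectivity step is precisely the content of the HLM08 machinery you invoke (after collapsing the co-finite sink $Z$ to a single sink vertex of a finite multigraph, which you should say explicitly), so as a self-contained proof the heart of the lemma is still outsourced; your alternative reverse-walk construction is only sketched and its key difficulty is acknowledged but not resolved. Given that the paper itself simply cites HLM08, this is an acceptable level of delegation, but it should be presented as such.

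The one step that does not work as written is your termination argument. Eventual periodicity plus Lemma~\ref{lemma: trace of the trajectory} produces an oriented cycle in $\rho_{t_0}$ for some later time $t_0$ in the periodic regime, not in the initial configuration $\rho$, and intermediate configurations $\rho_t$ of a walk started from an element of $\SF(Z)$ can genuinely contain oriented cycles (through the walker's current position): e.g.\ on the path $a\,\text{--}\,b\,\text{--}\,z$ with sink $\{z\}$ and $\rho(a)=b$, $\rho(b)=z$, one has $\rho_2(a)=b$, $\rho_2(b)=a$. So ``contradicting $\rho\in\SF(Z)$'' does not follow, and indeed no such implication can be salvaged, since termination holds for \emph{arbitrary} initial rotor configurations. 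The standard fix is the classical argument: if the walk never reaches $Z$, some vertex of the finite set $V\setminus Z$ is visited infinitely often; since each local mechanism $\tau_x$ has a single orbit, every neighbor of such a vertex is then entered infinitely often, and by connectedness of $G$ and $Z\neq\varnothing$ the walker eventually enters $Z$, a contradiction. With that repair (and the collapsing remark above), your proposal is sound.
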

Note that the rotor walk in Lemma~\ref{lemma: rotor walk stationarity for finite graph} is transient a.s. because of the assumption on $Z$, and therefore the corresponding final rotor configuration  is well defined a.s..

%


\begin{proof}[Proof of Lemma~\ref{lemma: tightness ouroboros}]
	Suppose that $\rho$ is a rotor configuration such that the rotor walk $(X_t(a,Z_R;\rho))_{t \geq 0}$ visits $Z$ some time after visiting a point $x$ that is at distance $r$ from $Z$.
	By Lemma~\ref{lemma: trace of the trajectory}, there exists an oriented path in $\sigma(a,Z_R;\rho)$ that starts at $x$ and ends in $Z$.
	That is to say,
	
	\begin{equation}\label{equation: tightness ouroboros 1}
	\rho \in \Ec_{r,Z}(a,Z_R) \quad \Longrightarrow \quad  \sigma(a,Z_R;\rho) \in \Dc_r.
	\end{equation}
	
	Let $\rho_R$ be a rotor configuration sampled from $\owusf(Z_R)$.
	Then
	\begin{equation}\label{equation: tightness ouroboros 2}
	\Pb[\rho_R \in \Ec_{r,Z}(a,Z_R) ] \quad \leq \quad \Pb[\sigma(a,Z;\rho_R) \in \Dc_r ] \quad = \quad \Pb[\rho_R \in \Dc_r ],  
	\end{equation}
	where the inequality  is due to \eqref{equation: tightness ouroboros 1} and the equality is due to Lemma~\ref{lemma: rotor walk stationarity for finite graph}.
	The   lemma now follows from \eqref{equation: tightness ouroboros 2} and Lemma~\ref{lemma: two part spanning tree}. 
\end{proof}

\section{Proof of Theorem~\ref{theorem: rotor walk stationarity}}\label{section: proof of main theorem}
In this section we present a proof of Theorem~\ref{theorem: rotor walk stationarity}.
We restate Theorem~\ref{theorem: rotor walk stationarity} here for the convenience of the reader.
(Note that the sink $Z$ in Theorem~\ref{theorem: rotor walk stationarity} is equal to the empty set $\varnothing$.)

\begin{reptheorem}{theorem: rotor walk stationarity}
	Suppose that  $G$ is a transient graph for which \eqref{equation: 1End} holds, and suppose that $\rho$  is sampled from $\owusf(\varnothing)$. 
	Then the final rotor configuration $\sigma(a,\varnothing; \rho)$ follows the law of $\owusf(\varnothing)$. 
\end{reptheorem}

We now build toward the proof of Theorem~\ref{theorem: rotor walk stationarity}.
Our first ingredient is the following lemma from \cite{JR08}.
Recall the definition of $\Ec_{r,W}(a,Z)$  from Definition~\ref{definition: weird rotor configurations}.

\begin{lemma}[{\cite[Theorem~6.2]{Cha19}}]\label{lemma: TFAE stationarity and trivial tail}
	Suppose that $G$ is a transient graph and  $Z=\varnothing$. TFAE:
	\begin{itemize}
		\item The configuration   $\sigma(a,\varnothing;\rho)$ follows the law of $\owusf(\varnothing)$ when $\rho$ is sampled from $\owusf(\varnothing)$;
		\item  For any $\varepsilon>0$ and any  nonempty finite subset $W$ of $V$, there exists a positive  $r:=r(\varepsilon,G,a,W)$ so that
		\begin{equation}\label{equation: stationarity condition to check}
		\lim_{R \to \infty}  \Pb[\rho_R \in \Ec_{r,W}(a,Z_R) ] \quad \leq \quad \varepsilon, \end{equation}
		where  $\rho_R$ is sampled from $\owusf(Z_R)$. \qed
	\end{itemize}
\end{lemma}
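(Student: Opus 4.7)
The plan is to prove this TFAE by using Lemma~\ref{lemma: rotor walk stationarity for finite graph} (finite-volume stationarity) to bridge the infinite process with its truncations at sinks $Z_R$. Under the truncated dynamics, $\sigma(a,Z_R;\rho_R)$ is already $\owusf(Z_R)$-distributed whenever $\rho_R \sim \owusf(Z_R)$, and $\owusf(Z_R)\to \owusf(\varnothing)$ in the cylinder-event topology by Definition~\ref{definition: wusf}. Hence the TFAE reduces to the question: on any fixed finite window $W$, do the laws of $\sigma(a,\varnothing;\rho)|_W$ and $\sigma(a,Z_R;\rho_R)|_W$ coincide in the limit $R\to\infty$? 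The condition (B) is then precisely what is needed to control the only possible discrepancy, while (A) is precisely the assertion that this discrepancy is controlled.

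First I would fix a finite $W$ and couple $\rho\sim\owusf(\varnothing)$ with $\rho_R\sim\owusf(Z_R)$ so that the two configurations agree on an arbitrarily large ball $B$ around $W\cup\{a\}$ with probability tending to $1$ as $R\to\infty$; this is the content of the weak convergence. I would then run both rotor walks under this coupling: by the update rule \eqref{equation: definition rotor walk} they perform identical moves as long as they stay in $B$ and the truncated walk has not reached $Z_R$. Once the infinite walk first reaches a vertex at distance $\geq r$ from $W$ (which happens in finite time by transience and Lemma~\ref{lemma: odometer finite}), its rotors in $W$ are frozen unless the walk returns to $W$; likewise for the truncated walk. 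The event that such a return happens is exactly $\Ec_{r,W}(a,\,\cdot\,)$ applied to the appropriate walk.

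For $(B)\Longrightarrow(A)$, I would choose $r$ so that $\lim_R \Pb[\rho_R\in\Ec_{r,W}(a,Z_R)]\leq\varepsilon$ and pick the coupling ball to have radius $\geq r$, concluding that $\sigma(a,\varnothing;\rho)|_W$ and $\sigma(a,Z_R;\rho_R)|_W$ agree with probability $\geq 1-\varepsilon-o(1)$; passing to the limit $R\to\infty$ and then $\varepsilon\to 0$ establishes stationarity on every cylinder event, hence (A). For $(A)\Longrightarrow(B)$, I would run the coupling in reverse: if (B) failed for some $W$, $\varepsilon_0>0$ and every $r$, then along a subsequence the rotors in $W$ of the two walks would disagree with probability bounded below by a constant, which via the weak convergence $\owusf(Z_R)\to\owusf(\varnothing)$ would force the law of $\sigma(a,\varnothing;\rho)|_W$ to differ from $\owusf(\varnothing)|_W$, contradicting (A). The main obstacle is the $(A)\Longrightarrow(B)$ direction: one must show that a genuine return excursion from distance $r$ back to $W$ produces a quantitative, non-self-cancelling difference between the two restrictions to $W$, which requires tracking which rotors in $W$ get modified on such an excursion, and Lemma~\ref{lemma: bounded number of visits to W} (odometer bound) is needed to keep the total number of modifications tight in $R$ so that a positive-probability disagreement is preserved in the limit.
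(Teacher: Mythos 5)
Note first that the paper does not prove this lemma at all: it is quoted verbatim from \cite[Theorem~6.2]{Chan18} and stamped with a \textnormal{qed}, so there is no internal proof to compare against; what follows is an assessment of your argument on its own terms. Your $(B)\Rightarrow(A)$ direction is close but has a gap. Under your coupling, the agreement $\sigma(a,\varnothing;\rho)|_W=\sigma(a,Z_R;\rho_R)|_W$ requires that \emph{neither} walk revisits $W$ after reaching distance $r$ from $W$, yet condition \eqref{equation: stationarity condition to check} only bounds the return event for the truncated walk driven by $\rho_R\sim\owusf(Z_R)$; it says nothing directly about the sink-free walk driven by $\rho\sim\owusf(\varnothing)$, and your coupling does not dominate the latter event by the former, since the two walks can separate once they leave the coupling ball. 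This is fixable: for a fixed configuration the $Z_R$-walk is a prefix of the $\varnothing$-walk, so $\Ec_{r,W}(a,Z_R)$ increases to $\Ec_{r,W}(a,\varnothing)$ as $R\to\infty$; each $\Ec_{r,W}(a,Z_R)$ depends only on the finitely many rotors on $V\setminus Z_R$, so weak convergence $\owusf(Z_{R'})\to\owusf(\varnothing)$ together with the monotonicity $\Ec_{r,W}(a,Z_R)\subseteq\Ec_{r,W}(a,Z_{R'})$ for $R'\geq R$ and condition (B) yields $\Pb[\rho\in\Ec_{r,W}(a,\varnothing)]\leq\varepsilon$. But this transfer argument must actually be written; it is not delivered by the coupling as you describe it.

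The $(A)\Rightarrow(B)$ direction is genuinely missing. Your sketch concludes from the failure of (B) that the rotors in $W$ of the two coupled walks disagree with probability bounded below, and from this that the law of $\sigma(a,\varnothing;\rho)|_W$ differs from $\owusf(\varnothing)|_W$. That last step is a non sequitur: two random variables can disagree with high probability under a particular coupling and still be equal in law, so a coupling-level discrepancy cannot by itself contradict (A). You acknowledge this yourself when you say one must show that a return excursion produces a ``quantitative, non-self-cancelling'' effect on the law of the rotors in $W$ (kept tight in $R$ via Lemma~\ref{lemma: bounded number of visits to W}), but that argument---the actual heart of this implication---is never carried out, and it is far from automatic, since a return excursion can restore the rotors of $W$ to values with the correct distribution. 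As the statement is an equivalence, what you have is at most one direction, and that only after the patch indicated above; for the implication the present paper actually uses, namely $(B)\Rightarrow(A)$, your outline is on the right track, but the full equivalence asserted in the lemma is not established by your proposal.
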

Lemma~\ref{lemma: TFAE stationarity and trivial tail} reduces  proving Theorem~\ref{theorem: rotor walk stationarity} to checking that 
the event $\Ec_{r,W}$ is very unlikely to occur.

Our second ingredient is the following technical lemma from \cite{JR08}.

\begin{lemma}[{\cite[Lemma~7.5]{JR08}}]\label{lemma: Radon-Nikodym spanning forest}
	Suppose that $G$ is a transient graph,
	$Z=\varnothing$, and  $W$ is a  nonempty finite subset of $V$.
	Then there exists $c(G,W)> 0$ such that,
	for every $R\geq 0$,
	\begin{equation*}
	\frac{\SF(\overline{Z_R})}{\SF({Z_R})} \quad \leq \quad  c(G,W)\..
	\end{equation*}
\end{lemma}

The proof for Lemma~\ref{lemma: Radon-Nikodym spanning forest} here is paraphrased from the (second) proof of Lemma~7.5 in \cite{JR08}, 
and is included here for the sake of completeness.

\begin{proof}[Proof of Lemma~\ref{lemma: Radon-Nikodym spanning forest}]
	We can without loss of generality assume that   $W$ is a singleton $\{x\}$, 
		as the general case will then follow from successive applications of the same argument.
	Let $\widehat{G}$ be the graph obtained from $G$ by adding an extra vertex $y$ and an extra edge $e=\{x,y\}$ connecting $x$ and $y$.
	Let \. $\widehat{Z_R} := Z_R \cup \{y\}$\..
	We have
	\begin{align}\label{equation: RN 1}
		\frac{\SF(\overline{Z_R})}{\SF({Z_R})} \quad = \quad  \frac{\Pb[(x,y) \in \widehat{F_R}]}{1- \Pb[(x,y) \in \widehat{F_R}]}\.,
	\end{align}
	where \. $\widehat{F_R}$ \. is sampled from $\owusf(\widehat{Z_R})$\..
	
	By using Wilson's algorithm~\cite{Wil96} to sample the $\widehat{Z_R}$-spanning forest  $\widehat{F_R}$,
	the probability 
	\. $\Pb[(x,y) \in \widehat{F_R}]$ \. is the probability that a simple random walk on $G$  started at $x$ first hits $\widehat{F_R}$ through $(x,y)$.
	This implies that
	\begin{equation}\label{equation: RN 2}
	 \lim_{R \to \infty}  \Pb[(x,y) \in \widehat{F_R}]  \quad = \quad 1 - p,   
	\end{equation}
	where $p:=p_{x,y}$ is the probability that the simple random walk on $G$ started at $x$ never visits $y$ (note that $p>0$ since $G$ is transient).	
	Plugging \eqref{equation: RN 2} into \eqref{equation: RN 1}, we get
	\[ \lim_{R \to \infty} \frac{\SF(\overline{Z_R})}{\SF({Z_R})} \quad = \quad  \frac{1}{p}  - 1 \quad < \quad \infty\.,  \]
	and the lemma now follows.
\end{proof}
 
We will use the following consequence of Lemma~\ref{lemma: Radon-Nikodym spanning forest}.
A set $\Bc$ of rotor configurations is \emph{$Z$-invariant} if $\Bc$ depends only on rotors outside of $Z$.
That is,
for any  $\rho,\rho'$,
\[ \rho(x)=\rho'(x) \ \text{ for all } \ x \in V\setminus \{Z\}  \ \text{ and } \ \  \rho \in \Bc \qquad \Longrightarrow \qquad \rho' \in \Bc.    \]

\begin{lemma}\label{lemma: Radon-Nikodym derivatives is positive}
	Suppose that $G$ is a transient graph,
	$Z=\varnothing$, and  $W$ is a  nonempty finite subset of $V$.
	Then there exists $c:=c(G,W)> 0$ such that,
	for all $R\geq 0$,
	\begin{equation*}
	\Pb[\rho_R \in \Bc_R  ]  \quad \leq  \quad  c  \, \Pb[\overline{\rho_R} \in \Bc_R  ], 
	\end{equation*}
	where $\rho_R$ is sampled from $\owusf(Z_R)$, $\overline{\rho_R}$ is sampled from $\owusf(\overline{Z_R})$, and $\Bc_R$ is an arbitrary $\overline{Z_R}$-invariant set. \qed
\end{lemma}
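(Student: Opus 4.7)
The plan is to exploit the fact that $V\setminus Z_R$ is finite, so that both $\owusf(Z_R)$ and $\owusf(\overline{Z_R})$ coincide with the uniform measures $\ousf(Z_R)$ and $\ousf(\overline{Z_R})$ on their respective (finite) supports. I will compare the two measures via a direct injection
\[
L\colon\SF(Z_R)\to\SF(\overline{Z_R})\times\prod_{x\in\overline{Z}}N(x),\qquad L(T):=\bigl(\pi(T),\,T|_{\overline{Z}}\bigr),
\]
where $\pi(T)$ is the rotor configuration on $V\setminus\overline{Z_R}$ obtained from $T$ by forgetting the outgoing rotors at vertices of $\overline{Z}$. Removing these outgoing edges cannot create cycles, and every $x\in V\setminus\overline{Z_R}$ retains an oriented path in $\pi(T)$ ending in $Z_R\cup\overline{Z}=\overline{Z_R}$, so $\pi(T)\in\SF(\overline{Z_R})$. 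Injectivity of $L$ is immediate since $T$ is reconstructed from $\pi(T)$ on $V\setminus\overline{Z_R}$ and from $T|_{\overline{Z}}$ on $\overline{Z}$.

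Interpret $\overline{\rho_R}$ as a rotor configuration on $V\setminus Z_R$ by extending the sample from $\owusf(\overline{Z_R})$ with rotors on $\overline{Z}$ chosen independently and uniformly from their neighbourhoods; then $\overline{\rho_R}$ is uniform on $\SF(\overline{Z_R})\times\prod_{x\in\overline{Z}}N(x)$. Since $\Bc_R$ is $Z_R$-invariant, membership in $\Bc_R$ depends only on the rotors on $V\setminus Z_R$, and the injectivity of $L$ gives
\[
\#\{T\in\SF(Z_R):T\in\Bc_R\}\ \leq\ \#\bigl\{(F,r):(F,r)\in\Bc_R\bigr\}.
\]
Dividing by $|\SF(Z_R)|$ yields the key comparison
\[
\Pb[\rho_R\in\Bc_R]\ \leq\ \frac{D\cdot|\SF(\overline{Z_R})|}{|\SF(Z_R)|}\cdot\Pb[\overline{\rho_R}\in\Bc_R],\qquad D:=\prod_{x\in\overline{Z}}|N(x)|,
\]
where $D$ is a finite constant depending only on $G$ and $\overline{Z}$.

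The main obstacle is to bound $|\SF(\overline{Z_R})|/|\SF(Z_R)|$ uniformly in $R$. Let $G_R$ be obtained from $G$ by contracting $Z_R$ to a single vertex $z_R$; then $|\SF(Z_R)|=\tau(G_R)$ and $|\SF(\overline{Z_R})|$ counts spanning forests of $G_R$ rooted at $\{z_R\}\cup\overline{Z}$. Writing both as principal minors of the Laplacian of $G_R$ and applying the Schur complement identity, the ratio equals $\det\bigl(g_R(x,y)\bigr)_{x,y\in\overline{Z}}$, where $g_R$ is the Green's function of simple random walk on $G$ killed upon hitting $Z_R$. Transience of $G$ gives the monotone bound $g_R(x,x)\leq\Gc(x)<\infty$ uniformly in $R$, and Hadamard's inequality for positive semidefinite matrices then yields
\[
\det\bigl(g_R|_{\overline{Z}\times\overline{Z}}\bigr)\ \leq\ \prod_{x\in\overline{Z}}g_R(x,x)\ \leq\ \prod_{x\in\overline{Z}}\Gc(x).
\]
Taking $c:=D\cdot\prod_{x\in\overline{Z}}\Gc(x)$ gives the required constant, independent of $R$. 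The technical heart is thus the identification of the spanning-forest ratio with a Green's function determinant and the use of transience to control it uniformly; the remainder is a clean counting manipulation.
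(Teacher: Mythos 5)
Your proposal is correct, and it is worth noting that the paper itself does not prove this lemma: it cites \cite[Lemma~7.5]{JR08}, which treats the case of a singleton $\overline{Z}$, and merely remarks that the general case ``follows a similar argument.'' What you have written is essentially that missing argument, carried out for arbitrary finite $\overline{Z}$, and by the same mechanism as J\'arai--Redig: a spanning-forest counting comparison whose ratio is identified with a Green-function quantity and bounded uniformly in $R$ using transience. The individual steps check out. Since $V\setminus Z_R$ and $V\setminus\overline{Z_R}$ are finite, both measures are indeed the uniform measures on $\SF(Z_R)$ and $\SF(\overline{Z_R})$; deleting the out-edges at $\overline{Z}$ gives an injection that leaves the configuration on $V\setminus Z_R$ unchanged, so membership in a $Z_R$-invariant event is preserved, and your counting inequality and the reduction to bounding $|\SF(\overline{Z_R})|/|\SF(Z_R)|$ are sound. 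Your convention of extending $\overline{\rho_R}$ by independent uniform rotors on $\overline{Z}$ is also a legitimate (and in general necessary) way to make $\Pb[\overline{\rho_R}\in\Bc_R]$ well defined; in the paper's application the events are in fact $\overline{Z_R}$-invariant, so the convention is immaterial there. Two small points to tighten. First, by the principal-minors matrix-tree theorem and Jacobi's identity the ratio equals the determinant of the inverse of the Laplacian with rows and columns of $\overline{Z_R}$ removed, restricted to $\overline{Z}\times\overline{Z}$; its entries are the degree-normalized Green function $g_R(x,y)/\deg(y)$ rather than $g_R(x,y)$ itself. The normalized matrix is the symmetric positive-definite one, so the Hadamard--Fischer inequality should be applied to it; this only alters your constant by the factor $\prod_{x\in\overline{Z}}\deg(x)$, which in fact cancels against your $D$, and the uniform bound $g_R(x,x)\leq\Gc(x)<\infty$ from transience goes through unchanged. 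Second, for small $R$ the set $\overline{Z}$ may intersect $Z_R$; the identical argument applies with $\overline{Z}\setminus Z_R$ in place of $\overline{Z}$, with a constant no larger than the one you already have, so the statement ``for any $R\geq 0$'' is covered.
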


\begin{proof}
	Let $\Ac_R$ and $\overline{\Ac_R}$ be the set of oriented subgraphs of $G$ given by
	\[ \Ac_R \ := \  \{ F \in \SF(Z_R) \. \mid \. \rho_F \in \Bc_R  \}; \qquad \overline{\Ac_R} \ := \  \{ F \in \SF(\overline{Z_R}) \. \mid \. \rho_F \in \Bc_R  \}\..  \]	
	The  probabilities in Lemma~\ref{lemma: Radon-Nikodym derivatives is positive} can then be rewritten as
	\begin{align}\label{equation: RND 1}
	\Pb[\rho_R \in \Bc_R  ] \ = \ \frac{|\Ac_R|}{|\SF(Z_R)|}\.; \qquad
	\Pb[\overline{\rho_R} \in \Bc_R  ] \ = \ \frac{|\overline{\Ac_R}|}{|\SF(\overline{Z_R})|}\..
	\end{align}

	Let \. $\phi: \Ac_R \to \overline{\Ac_R}$ \. be the map sending $F \in \Ac_R$ to the subgraph $\phi(F)$ obtained from $F$ by deleting edges with source vertex in $W$.
	 Note that $\phi(F)$  is contained in $\overline{\Ac_R}$ since 
	since $\Bc_R$ is a $\overline{Z_R}$-invariant set\..
	Also note that, for every element of $\overline{\Ac_R}$,
	its preimage under $\phi$ contains at most $\prod_{x \in W} \deg(x)$ many elements.
	This implies that 
	\begin{align}\label{equation: RND 2}
		|\Ac_R|  \quad \leq \quad  |\overline{\Ac_R}| \ \prod_{x \in W} \deg(x)\.. 
	\end{align}
	On the other hand, we have by Lemma~\ref{lemma: Radon-Nikodym spanning forest} that there exists $c':=c'(G,W)$ such that
	\begin{equation}\label{equation: RND 3}
	\frac{\SF(\overline{Z_R})}{\SF({Z_R})} \quad \leq \quad  c'\..
	\end{equation}
	Plugging \eqref{equation: RND 3} and \eqref{equation: RND 2} into \eqref{equation: RND 1},
	we get
	\[  \frac{\Pb[\rho_R \in \Bc_R  ]}{\Pb[\overline{\rho_R} \in \Bc_R  ]} \quad \leq \quad  c' \prod_{x \in W} \deg(x)\.,  \]
	for which the lemma follows.
\end{proof}

Intuitively speaking, Lemma~\ref{lemma: Radon-Nikodym derivatives is positive} 
lets us switch the initial rotor configuration with another configuration that will interact nicely with the event $\Ec_{r,W}$.



Our third, fourth, and fifth ingredients are Lemma~\ref{lemma: bounded number of visits to W}, Lemma~\ref{lemma: rho change to xi}, and Lemma~\ref{lemma: tightness ouroboros}, respectively.
We are now ready to present our proof of Theorem~\ref{theorem: rotor walk stationarity}.
\begin{proof}[Proof of Theorem~\ref{theorem: rotor walk stationarity}]
	Let $Z=\varnothing$, let $\varepsilon>0$, and let $W$ be an arbitrary nonempty finite subset of $V$.
	It suffices to check that \eqref{equation: stationarity condition to check} holds under the assumptions of Theorem~\ref{theorem: rotor walk stationarity}.

	Let $\rho_R$ be sampled from $\owusf$.
	Let $K:=K(\varepsilon, G, a,W)$ be as in Lemma~\ref{lemma: bounded number of visits to W}.
	We have for any $r \geq 0$ that
	\begin{equation}\label{equation: proof of main theorem 1}
	\begin{split}
	& \Pb[\rho_R \in \Ec_{r,W}(a,Z_R) ] \\
	\leq \  & \Pb[\rho_R \notin \Cc_{K,W}(a,Z_R) ] \ + \ \Pb[\rho_R \in \Ec_{r,W}(a,Z_R) \cap \Cc_{K,W}(a,Z_R)]\\
	\leq \ &  \frac{\varepsilon}{2} \ +\  \Pb[\rho_R \in \Ec_{r,W}(a,Z_R) \cap \Cc_{K,W}(a,Z_R)],
	\end{split}
	\end{equation}
	where the second inequality is due to Lemma~\ref{lemma: bounded number of visits to W}.
	
	Write $\overline{Z_R}:= W \cup Z_R$. 
	We have by   Lemma~\ref{lemma: rho change to xi}
	that 
	\begin{equation}\label{equation: proof of main theorem 2}
	\begin{split}
	& \Pb[\rho_R \in \Ec_{r,W}(a,Z_R) \cap \Cc_{K,W}(a,Z_R)] \\
	\leq \ & \sum_{i=0}^{K-1} \sum_{a_1,\ldots, a_{i} \in N(W) } \Pb[ \xi_{i}(a,a_1,\ldots, a_{i-1}, \overline{Z_R};\rho_R) \in \Ec_{r,W}(a_{i},\overline{Z_R})].
	\end{split}
	\end{equation}  
	
	Let $c:=c(G,W)$ be as in Lemma~\ref{lemma: Radon-Nikodym derivatives is positive}.
	Applying Lemma~\ref{lemma: Radon-Nikodym derivatives is positive} with
	$\Bc_R$ being  
	\[ \Bc_R  \ := \  \Bc_R(a,a_1,\ldots, a_{i-1}) \ := \ \big \{ \rho \. \mid \. \xi_{i}(a,a_1,,\ldots, a_{i-1}, \overline{Z_R};\rho) \in \Ec_{r,W}(a_i,\overline{Z_R}) \big\}\.,   \]
	we get 
	\begin{equation}\label{equation: proof of main theorem 3}
	\begin{split}
	& \Pb[ \xi_{i}(a,a_1,\ldots, a_{i-1}, \overline{Z_R};\rho_R) \in \Ec_{r,W}(a_i,\overline{Z_R})] \\
	\leq &  \ c  \, \Pb[ \xi_{i}(a,a_1,\ldots, a_{i-1}, \overline{Z_R};\overline{\rho_R}) \in \Ec_{r,W}(a_i,\overline{Z_R})],
	\end{split}
	\end{equation}  
	where $\overline{\rho_R}$ is sampled from $\owusf(\overline{Z_R})$.
	
	Now note that $\xi_{i}(a,a_1,\ldots, a_{i-1}, \overline{Z_R}; \overline{\rho_R})$ follows the law of $\owusf(\overline{Z_R})$
	by \eqref{equation: definition xi} and Lemma~\ref{lemma: rotor walk stationarity for finite graph}, so we have 
	\begin{equation}\label{equation: proof of main theorem 4}
	\begin{split}
	& \Pb[ \xi_{i}(a,a_1,\ldots, a_{i-1}, \overline{Z_R};\overline{\rho_R}) \in \Ec_{r,W}(a_i,\overline{Z_R})] \quad = \quad \Pb[ \overline{\rho_R} \in \Ec_{r,W}(a_i,\overline{Z_R})].
	\end{split}
	\end{equation}  
	
	Now let $L:=2\, c \, \sum_{i=0}^{K-1}|N(W)|^i$, and  choose  
	$r=r(\varepsilon/L,G,W)$ to be as in Lemma~\ref{lemma: tightness ouroboros}.
	It follows from Lemma~\ref{lemma: tightness ouroboros} that
	\begin{equation}\label{equation: proof of main theorem 5}
	\begin{split}
	& \Pb[ \overline{\rho_R} \in \Ec_{r,W}(a_i,\overline{Z_R})] 
	 \quad  \leq  \quad  \frac{\varepsilon}{L}.
	\end{split}
	\end{equation}

	Combining \eqref{equation: proof of main theorem 1}, \eqref{equation: proof of main theorem 2}, \eqref{equation: proof of main theorem 3},  \eqref{equation: proof of main theorem 4},  and \eqref{equation: proof of main theorem 5} together, we get
	\begin{equation}\label{equation: proof of main theorem 6}
	\begin{split}
	\Pb[\rho_R \in \Ec_{r,W}(a,Z_R) ] \quad \leq &\quad  \frac{\varepsilon}{2}+ c \sum_{i=0}^{K-1} \sum_{a_1,\ldots, a_i \in N(W)}  \frac{\varepsilon}{L}  \quad = \quad  \varepsilon.
	\end{split}
	\end{equation}
	The theorem now follows from \eqref{equation: proof of main theorem 6} and Lemma~\ref{lemma: TFAE stationarity and trivial tail}.
\end{proof}

Finally, we remark that the following strengthened version of Theorem~\ref{theorem: rotor walk stationarity} can be proved verbatim.

\begin{theorem}
	Suppose that  $G$ is a transient graph for which \eqref{equation: 1End} holds and $Z$ is finite.
	Then the final rotor configuration $\sigma(a,Z; \rho)$ follows the law of $\owusf(Z)$ when $\rho$ is sampled from $\owusf(Z)$. \qed
\end{theorem}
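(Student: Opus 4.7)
The plan is to mirror the proof of Theorem~\ref{theorem: rotor walk stationarity}, generalizing each step from $Z=\varnothing$ to a finite sink $Z$. The skeleton of the argument remains intact: reduce stationarity to showing $\Pb[\rho_R \in \Ec_{r,W}(a,Z_R)] \leq \varepsilon$ for $r$ large, control the odometer on $W$, decompose the walk at successive visits to $W$ into sub-walks with enlarged sink $\overline{Z_R} := W \cup Z_R$, swap measures via a Radon--Nikodym bound, identify each $\xi_i$ as a fresh wired sample, and finish with the $\Ec_{r,W}$ tightness estimate.

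A first triage shows that most ingredients already accept an arbitrary nonempty finite sink. Lemma~\ref{lemma: bounded number of visits to W}, Lemma~\ref{lemma: rho change to xi}, Lemma~\ref{lemma: rotor walk stationarity for finite graph}, and Lemma~\ref{lemma: tightness ouroboros} are all stated for such a sink together with a decreasing exhaustion satisfying \eqref{equation: DecEx}. Only Lemma~\ref{lemma: TFAE stationarity and trivial tail} and Lemma~\ref{lemma: Radon-Nikodym derivatives is positive} need to be extended. For the TFAE equivalence, the standard approximation argument carries through verbatim once one replaces $\owusf(\varnothing)$ by $\owusf(Z)$ and $Z_R \downarrow \varnothing$ by $Z_R \downarrow Z$, since the condition~\eqref{equation: stationarity condition to check} only probes the behavior of the walk on and near the finite set $W$. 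For the Radon--Nikodym comparison between $\owusf(Z_R)$ and $\owusf(\overline{Z_R})$, the author has already flagged that the singleton-to-finite extension in $\overline{Z}$ is routine; the same transfer-current (or Wilson's algorithm) calculation yields a uniform cylinder density, and hence a constant $c = c(G, \overline{Z}) < \infty$, valid for any pair of nested finite sinks.

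With both generalizations in place the main argument copies almost line for line. Fix $\varepsilon > 0$ and nonempty finite $W \subseteq V$, and note that $\overline{Z} = W \cup Z$ is nonempty finite and $\overline{Z_R} = W \cup Z_R$ is a decreasing exhaustion of $\overline{Z}$. Apply Lemma~\ref{lemma: bounded number of visits to W} to cap the odometer on $W$ by some $K$ at cost $\varepsilon/2$; on the complementary event $\Cc_{K,W}(a, Z_R)$, decompose via Lemma~\ref{lemma: rho change to xi} into the $O(K |N(W)|^{K})$ choices of $(i, a_{1}, \dots, a_{i})$; use the generalized Radon--Nikodym bound to replace $\rho_R$ by $\overline{\rho_R} \sim \owusf(\overline{Z_R})$; identify $\xi_{i}(a, a_{1}, \dots, a_{i-1}, \overline{Z_R}; \overline{\rho_R})$ as $\owusf(\overline{Z_R})$-distributed by Lemma~\ref{lemma: rotor walk stationarity for finite graph}; and finally apply Lemma~\ref{lemma: tightness ouroboros} to the sink $\overline{Z}$ with threshold $\varepsilon / L$, where $L := 2 c \sum_{i=0}^{K-1} |N(W)|^{i}$. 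Summing gives $\Pb[\rho_R \in \Ec_{r,W}(a, Z_R)] \leq \varepsilon$, and the generalized TFAE lemma converts this into the desired stationarity of $\sigma(a, Z; \rho)$.

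The main obstacle I anticipate is confirming that the Radon--Nikodym constant $c(G, \overline{Z})$ stays bounded when $Z$ is nonempty. The existing argument reduces the comparison to a finite-energy-type estimate involving effective resistances between the components of $\overline{Z}$; when $Z = \varnothing$ those resistances trivially extend to infinity, while for $Z \neq \varnothing$ one must argue that the resistances between $Z$ and $W$ (computed inside the wired graphs associated to the exhaustion) stabilize as $R \to \infty$. This should follow from transience of $G$ and the standard exhaustion theory of the wired spanning forest, but it is the one place where the adaptation requires explicit verification rather than pure symbol substitution.
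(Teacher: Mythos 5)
Your proposal takes exactly the route the paper intends: the paper offers no separate proof of this theorem beyond the remark that it follows by the same argument as Theorem~\ref{theorem: rotor walk stationarity}, and your plan (extend Lemma~\ref{lemma: TFAE stationarity and trivial tail} and Lemma~\ref{lemma: Radon-Nikodym derivatives is positive} from $Z=\varnothing$ to finite $Z$, then rerun the odometer bound, the decomposition of Lemma~\ref{lemma: rho change to xi}, the measure swap, and the tightness estimate with sink $\overline{Z}=W\cup Z$) is precisely that argument, including correctly identifying the Radon--Nikodym comparison as the one step needing explicit verification. The only small detail to add is that the decomposition produces the event $\Ec_{r,W}(a_i,\overline{Z_R})$ while Lemma~\ref{lemma: tightness ouroboros} controls $\Ec_{r,\overline{Z}}(a_i,\overline{Z_R})$; since $W\subseteq\overline{Z}$ and $D:=\max_{z\in Z}d_G(W,z)<\infty$, one has $\Ec_{r,W}(a_i,\overline{Z_R})\subseteq\Ec_{r-D,\overline{Z}}(a_i,\overline{Z_R})$, so taking $r$ larger by $D$ closes this gap.
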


\section{Concluding remarks}\label{section: open problems}
We conclude with a few natural questions:
\subsection{} Is $\owusf$  the unique infinite-step stationary measure for  rotor walk on   $\Zb^d$ for $d\geq 3$? 
Does there exist an infinite-step stationary measure for  rotor walk on $\Zb^2$?

\subsection{}	
Is  \eqref{equation: 1End} a necessary condition for $\owusf$ to be an infinite-step stationary measure for  rotor walk?

\subsection{} Fix a local mechanism for every vertex in $\Zb^d$,
and choose the initial rotor $\rho(x)$ for $x \in \Zb^d$ independently and uniformly at random  from the outgoing edges of $x$.
What is the escape rate of this rotor walk?

%
%
%
%

\section*{Acknowledgement}
The author would like to thank Lionel Levine and Yuval Peres for their advising throughout the whole project, Ander Holroyd  for inspiring discussions, and Yuwen Wang for proofreading.
The author would also like to thank the anonymous referee for their careful review and insightful suggestions.
Part of this work was done when the author was visiting the Theory Group at Microsoft Research, Redmond, 
and when the author was a graduate student at Cornell University.

\vskip.9cm

\bibliographystyle{amsalpha}

\end{document}